\def \dim{{\mbox {dim}}\,}
\def\g{{\mathfrak g}}
\def\R{{\bf R}}
\def \OTO{{\rm O(2,1)}}
\def \OT3O{ \OTO \times \OTO \times \OTO }
\def \o21{o(2,1)}
\def \tOTO{{\rm \widetilde{O}(2,1)}}
\def \tOT3O{ \tOTO \times \tOTO \times \tOTO }
\def \o21{o(2,1)}
\def \PSLC{{\rm O_+(2,1)}}
\def \PS3LC{ \PSLC \times \PSLC \times \PSLC }
\def \SLC{{\rm \widetilde{O}_+(2,1)}}
\def \S3LC{ \SLC \times \SLC \times \SLC }
\def\dist {{\bf dist}}
\def\E{{\bf E}}
\def\d{{\rm d}}
\def\g{{g}}
\def\dvol{{\d \nu}}
\def\gauss{{\psi}}
\def\gb{{\tilde{g}}}
\def\a{{\mathcal A}}
\def\dv{\dvol}
\def\prior{{\eta}}
\def\briskk#1{\ifthenelse{\equal{#1}{}}{\tilde{R}_{\epsilon}}{\tilde{R}_{\epsilon}(#1)}}
\def\s{{s}}
\def\sr#1{{\mathfrak #1}}
\def\H{{\mathcal H}_{\epsilon}} 
\def\Ri{{\rm R}}
\def\scal{{\rm scal}}
\def\cov{{\mathfrak c}}
\newcommand{\normal}[2] { {\mathcal N}(#1,#2) }
\newcommand\Hom [1] {{\rm Hom(}#1{\rm )}}
\newcommand\Ric [1] {{\rm Ric}_{#1}}
\newtheorem{theorem}{Theorem}[section]
\newtheorem{thm}{Theorem}[section]
\newtheorem{lemma}[thm]{Lemma}
\newtheorem{proposition}[thm]{Proposition}
\newtheorem{corollary}[thm]{Corollary}
\begin{document}
\title[Bayesian estimation of maps]{A Bayesian approach to the
  Estimation of maps between riemannian manifolds}

\author[Butler, Levit]{Leo T. Butler and Boris Levit}
\address{LB: School of Mathematics, 6214 James Clerk Maxwell Building,
  The University of Edinburgh,  Edinburgh, UK, EH9 3JZ
BL: Department of Mathematics and Statistics
Queen's University,
Kingston, ON, Canada, K7L 3N6}
\email{l.butler@ed.ac.uk, blevit@mast.queensu.ca}
\subjclass[2000]{Primary 62C10; Secondary 62C20, 62F12, 53B20, 53C17, 70G45}
\keywords{ Bayesian problems, Bayes estimators, Minimax estimators,
 riemannian geometry, sub-riemannian geometry, sub-laplacian, harmonic
 maps}

\date{\today}
\thanks{}

\maketitle

\begin{abstract}
Let $\Theta$ be a smooth compact oriented manifold without boundary,
imbedded in a euclidean space ${\bf E}^s,$ and let $\gamma$ be a
smooth map of $\Theta$ into a Riemannian manifold $\Lambda$. An
unknown state $\theta\in \Theta$ is observed via $X=\theta+\epsilon
\xi$ where $\epsilon>0$ is a small parameter and $\xi$ is a white
Gaussian noise. For a given smooth prior $\lambda$ on $\Theta$ and
smooth estimators $g(X)$ of the map $\gamma$ we derive a second-order
asymptotic expansion for the related Bayesian risk. The calculation
involves the geometry of the underlying spaces $\Theta$ and $\Lambda$,
in particular, the integration-by-parts formula. Using this result, a
second-order minimax estimator of $\gamma$ is found based on the
modern theory of harmonic maps and hypo-elliptic differential
operators.
\end{abstract}

\section{Introduction} 
\label{intro}

In many estimation problems, one has a state which lies on a manifold
but one observes this state plus some error in a euclidean space. It
is desirable to utilise the underlying geometry to construct an
estimator of the state. The present paper uses a Bayesian approach to
construct asymptotically minimax estimators along with the least favourable
Bayesian priors. 

The use of differential geometry in optimal statistical estimation has
a long history, as documented in a recent article ``Information
geometry'' on Wikipedia, for example. Early applications of
differential geometry to the derivation of second-order asymptotic
properties of the maximum likelihood estimates are summarized in
\cite{Amari}. However, a rigorous approach to second-order optimality
requires a decision-theoretical framework. This approach was developed
in \cite{Levit1,Levit2,Levit3} and a number of subsequent
publications.

In some cases, one is interested in the second-order optimal
estimation of a given function of parameters. For an early application
of this approach see \cite{Levit4}. As a general rule, such problems
require more sophisticated differential-geometric techniques such as
the theory of harmonic maps and hypoelliptic differential operators
\cite{EL:1971,Jost}.

Consider the following situation: $\E$ is a real $s$-dimensional
vector space with inner product $\sigma$ and $\Theta$
(resp. $\Lambda$) is a smooth manifold with riemannian metric ${\bf
g}$ (resp. ${\bf h}$). Assume that the smooth riemannian manifold
$(\Theta,{\bf g})$ is isometrically embedded in a euclidean space
$(\E,\sigma)$ via the inclusion map $\iota$, and $\Theta
\stackrel{\gamma}{\longrightarrow} \Lambda$ is a smooth map. Smooth
means infinitely differentiable. These data are summarized by the
diagram
$$
\xymatrix@!R@R=6pt{
(\E,\sigma) \ar@{.>}[ddrr]^{\g}\\
\\
(\Theta,{\bf g}) \ar@{->}[uu]^{\iota} \ar[rr]^{\gamma} && (\Lambda,{\bf h}),
}
$$ Suppose that $X \in \E$ is a gaussian random variable with
conditional mean $\theta \in \Theta$ and covariance operator%
\footnote{By convention, the covariance operator is the induced inner
  product on the dual vector space $\E^*$. If we regard
  $\sigma$ as a linear isomorphism of $\E \to \E^*$, then the
  covariance operator is the inverse linear isomorphism $\cov=\sigma^{-1} :
  \E^* \to \E$.}  $\epsilon^2 \cov$, {\it i.e.}
$$X \sim \normal{\theta}{\epsilon^2 \cov},\ \ \ \ \theta \in
\Theta.$$ A basic statistical problem is to determine an estimator of
``$\gamma(X)$,'' by which we mean an optimal extension of $\gamma$ off
$\Theta$, in the minimax sense. To make this precise, let
$\g : \E \to \Lambda$ be an estimator, and let $\dist$ be
the riemannian distance function of $(\Lambda,{\bf\ h})$. Define a
loss function by
\begin{equation} \label{eq:qr}
R_{\epsilon}(\g,\theta) = \int_{x \in \E}\,
\dist(\g(x),\gamma(\theta))^2\,
\gauss_{\epsilon}(x-\iota(\theta))\, \d x,
\end{equation}
where $|\bullet|$ is the norm on $\E$ induced by $\sigma$,
$\gauss_{\epsilon}(u) = \exp(-|u|^2/2\epsilon^2 ) / (2\pi
\epsilon^2)^{\frac{s}{2}}$ and $\d x$ is the volume form on $\E$
induced by $\sigma$.%
\footnote{One can introduce a $\sigma$-orthonormal coordinate system
  $x_i$ on $\E$. In this case, $|x|^2 = \sum_i x_i^2$ and $\d x = \d
  x_1 \wedge \cdots \wedge \d x_s$. }  Define the associated minimax
  risk
\begin{equation} \label{eq:mmrisk}
r_{\epsilon}(\Theta) = \inf_{\g}\, \sup_{\theta \in \Theta}\
R_{\epsilon}(\g,\theta).
\end{equation}

\subsection{Results} 
The present paper takes a Bayesian approach to the problem of
determining the asymptotically minimax estimator $\g$.  In
Bayesian statistics, the point $\theta$ is viewed as a random variable
with a prior distribution $\lambda(\theta) \d\theta$ where $\int_{\theta
\in \Theta} \lambda(\theta)\, \d\theta = 1$ ($\d\theta = \dvol_{{\bf
g}}$ is the riemannian volume of $(\Theta,{\bf g})$ ). The {\em
Bayesian risk} of a map $g$ is
\begin{equation} \label{eq:brisk}
R_{\epsilon}(g;\lambda) = \int_{x \in \E} \int_{\theta \in \Theta}
\dist(g(x),\gamma(\theta))^2\, \lambda(\theta)\,
\gauss_{\epsilon}(x-\iota(\theta))\ \d x\, \d\theta.
\end{equation}
A {\em Bayes estimator} $g : \E \to \Lambda$ is a map which minimizes
the Bayesian risk over all maps. In Theorem~\ref{thm:best}, an
expansion in $\epsilon$ of the Bayes estimator $\gb_{\epsilon}$, for a
fixed Bayesian prior, is computed. The constant term in
$\gb_{\epsilon}$ is $\gamma \circ \pi$, where $\pi : N \Theta \to
\Theta$ is the projection map of the normal bundle of $\Theta \subset
\E$. The order-$\epsilon^2$ term in $\gb_{\epsilon}$ is composed of
two parts: the first part is independent of the Bayesian prior and its
contribution is to reduce the energy of $\gamma$; the second part is
due to the gradient of the prior $\lambda$ and it tries to move the
estimator in the direction which maximizes
$\lambda$. Theorem~\ref{thm:best} also computes the Bayesian risk
$R_{\epsilon}(\gb_{\epsilon}; \lambda)$ of $\gb_{\epsilon}$ up to
$O(\epsilon^6)$.

The results of Theorem~\ref{thm:best} are used to obtain ``the''
optimal Bayesian prior. There arises a number of interesting problems
of a statistical nature in this regard: foremost is the problem of
deciding what should be the {\em flat} Bayesian prior. Given a flat
Bayesian prior, it is shown that the 2nd-order optimal Bayesian prior
satisfies an eigenvalue problem. This leads to a second difficulty: in
general, the leading term in the Bayesian risk is determined by $|\d
\gamma|^2$ and is therefore largely independent of the Bayesian
prior. Thus, Theorems \ref{thm:optprior}--\ref{thm:optpa} give several
criteria for second-order optimal Bayesian priors.

\subsection{Note to Reader} The present paper resulted from the work
of B. Levit \cite{Levit1, Levit2,Levit3,Levit4}. This work, and early drafts of
the present paper, were done largely in local coordinates using Taylor
series. This proved to be both daunting, difficult and unsatisfying
because we were forced to assume that $\Lambda$ was isometrically
embedded in some euclidean space and use the ambient distance
function. Paradoxically, these computations produced estimators which
did not take values in $\Lambda$.

The problem with the answer these computations produced was obvious,
the reason for the problem was less so. The ultimate reason is that
the Taylor series expansion of a function is not a tensorial, or
intrinsic, object. Rather, {\em a} Taylor series depends on the
geometry of the domain and range of the function: it is, in other
words, a {\em geometric} object. It is easy to see why this is: a
Taylor series requires the notion of a second derivative to be
defined, but it is well-known that a second derivative can be defined
only with the aid of an affine connection--a geometric
object. Calculations with Taylor series in local coordinates masked
this fact and completely mislead us.

This is a roundabout way of explaining the extensive geometric
formalism used in the present paper. We hope that the reader will
remember that behind this formalism is a simple aim: to define a
Taylor series in a rigorous and useful way. As a by-product, the
answers that result can be stated in a much more compact way.

This paper proceeds as follows: in section \ref{sec:mac}, a theory of
Taylor-Maclaurin series is developed for riemannian manifolds and
several useful curvature and integration-by-parts formulas are
developed that are used in seqsequent sections; section
\ref{sec:bayes} discusses the existence and uniqueness of a Bayes
estimator; section \ref{sec:expansion} utilizes the theory developed
in section \ref{sec:mac} to expand the Bayesian risk functional and
determined the Bayes estimator up to $O(\epsilon^6)$; section
\ref{sec:optp} develops criteria for second-order optimal Bayesian
priors in terms of the sub-laplacian of a naturally constructed
sub-riemannian structure; section \ref{sec:apps} computes the examples
where $\gamma$ is a riemannian immersion or submersion, which includes
the cases where $\gamma$ the identity map of $\Theta$ and the
inclusion map $\iota$ of $\Theta \subset \E$. 

Throughout, it is assumed that $\Theta, \Lambda$ are a compact,
connected, boundaryless smooth manifolds.

\section{Maclaurin Series} \label{sec:mac}
This section develops a theory of Maclaurin series of a map
between riemannian manifolds, then it exposes some useful formulas
from riemannian geometry that are used in subsequent sections. First,
it is useful recall some constructions.

\subsection{Induced metrics} \label{ssec:im}
Let $X$ and $Y$ be real inner-product spaces. The vector space of
linear maps $X\to Y$ is denoted by $\Hom{X;Y}$. Define the inner
product of linear maps $A,B \in \Hom{X;Y}$ by
\[
\langle A,B \rangle := \sum_i \langle A.e_i, B.e_i \rangle =
 {\rm Tr}(A'B),
\]
where $e_i$ is an orthonormal base of $X$. The Hilbert-Schmidt norm of
a linear map is defined in the natural way from this inner product. By
construction, if $x \in X$, then $|A.x| \leq |A||x|$.

We can make $X^{\otimes^n}$ (the $n$-fold tensor product of $X$ with
itself) into a real inner-product space by defining
\[
\langle a_1\otimes \cdots \otimes a_n, b_1 \otimes \cdots \otimes b_n
\rangle = \langle a_1,b_1 \rangle \cdots \langle a_n,b_n \rangle,
\]
for $a_i, b_i \in X$ and extending by bi-linearity. The previous
construction makes $\Hom{X^{\otimes^n}, Y}$ into a real inner-product
space. We will use these constructions henceforth without further
comment.

\subsection{Maclaurin series}
Let $(M,g)$ and $(N,h)$ be riemannian manifolds without boundary and
let $M \stackrel{\phi}{\longrightarrow} N$ be a smooth map. For $x \in
M$ and $y=\phi(x)$, let $T_xM$ (resp. $T_yN$) be the tangent space to
$M$ (resp. $N$) at $x$ (resp. $y$). The exponential map $\exp_x$
(resp. $\exp_y$) of $g$ (resp. $h$) is injective on a disk of radius
$a=a(x)$ in $T_xM$ (resp. $s=s(y)$ in $T_yN$), while the tangent map
of $\phi$ at $x$, $\d_x \phi$, maps a disk of radius $t$ into a disk
of radius $t \times |\d_x \phi|$. If $r=r(x)$ is defined to be the
minimum of $s(y)/|\d_x \phi|$ and $a(x)$, then there is a commutative
diagram%
\footnote{The functions $a$ and $s$ may be assumed to be smooth.}
$$
\xymatrix@!R@R=6pt{
(T_x^rM,g_x) \ar[rr]^{\varphi} \ar[dd]_{\exp_x} && (T_y^sN, h_y) \ar[dd]^{\exp_y}\\
&&& { i.e.}\ \varphi = \exp_y^{-1} \circ \phi \circ \exp_x,\\
(M,g) \ar[rr]^{\phi} && (N,h),
}
$$ where $T^r_xM$ (resp. $T^s_yN$) is the disk radius $r$ (resp. $s$)
in $T_xM$ (resp. $T_yN$) centred at $0$.  The map $\varphi$ is a
smooth map between open subsets of euclidean spaces that maps $0$ to
$0$. Its Maclaurin series expansion is well-defined and can be written
as
\begin{equation} \label{eq:varphi}
\varphi(v) = \d\varphi(v) + \frac{1}{2} \nabla\d\varphi(v,v) +
\frac{1}{6} \nabla^2\d\varphi(v,v,v) + O(|v|^4),
\end{equation}
for all $v \in T^r_xM$. The hessian $\nabla\d\varphi$ may be
understood as the ordinary second derivative of a map between vector
spaces, as can $\nabla^2\d\varphi$. However, Lemma \ref{lem:nabla} is
essential and relates the derivatives of $\d \varphi$ to the covariant
derivatives of $\d\phi$ \cite{EL:1971}.

\begin{lemma} \label{lem:nabla}
Let $v \in T_xM$. Then $\left. \nabla^k \d\varphi(v,\ldots,v)\right|_0
  = \left. \nabla^k \d\phi(v,\ldots,v)\right|_x$ for all $k\geq 0$ and
  all $x\in M$.
\end{lemma}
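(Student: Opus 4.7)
The plan is to work in Riemannian normal coordinates centered at $x$ on $M$ and at $y = \phi(x)$ on $N$. In such charts $\exp_x$ and $\exp_y$ are the identity maps of $T_xM$ and $T_yN$ onto their images, so the coordinate representation of $\phi$ coincides literally with $\varphi$, and the Levi-Civita Christoffel symbols $\Gamma^k_{ij}$ on $M$ and $\tilde\Gamma^a_{bc}$ on $N$ vanish at the respective origins of the charts. The proof then proceeds by induction on $k$.

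The base case $k = 0$ is the chain rule, using $\d\exp_x|_0 = {\rm id}$ and $\d\exp_y|_0 = {\rm id}$, which yields $\d\varphi|_0 = \d\phi|_x$. For $k = 1$, the standard coordinate formula
\[
(\nabla\d\phi)^a_{ij} \;=\; \partial_i\partial_j\phi^a \;-\; \Gamma^k_{ij}\,\partial_k\phi^a \;+\; \tilde\Gamma^a_{bc}(\phi)\,\partial_i\phi^b\,\partial_j\phi^c
\]
evaluated at $x$ collapses to the ordinary Hessian $\partial_i\partial_j\varphi^a(0)$, because both Christoffel-symbol terms are evaluated at the origins of the normal charts.

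For the inductive step at general $k$, I would introduce the auxiliary curve $c(t) := \phi\circ\gamma_v(t)$, where $\gamma_v(t) = \exp_x(tv)$ is the radial geodesic from $x$ with initial velocity $v$. An induction on $k$ using the geodesic equation $\nabla_{\dot\gamma_v}\dot\gamma_v = 0$ yields the intrinsic identity
\[
\nabla^k\d\phi(v,\ldots,v)\big|_x \;=\; \frac{D^{k+1} c}{dt^{k+1}}\bigg|_{t=0},
\]
where $D/dt$ denotes covariant differentiation along $c$ in $N$. Meanwhile, in normal coordinates at $y$ the curve $c$ has coordinate representative $\tilde c(t) = \varphi(tv)$, so by the definition of the Maclaurin expansion of $\varphi$ at $0$, one has $\nabla^k\d\varphi(v,\ldots,v)|_0 = \tilde c^{(k+1)}(0)$, the ordinary $(k+1)$-th derivative.

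Identifying $\frac{D^{k+1} c}{dt^{k+1}}|_{t=0}$ with $\tilde c^{(k+1)}(0)$ is the main obstacle. Expanded in normal coordinates at $y$, the two quantities differ only by correction terms built from contractions of $\partial^{\alpha}\tilde\Gamma(y)$ with derivatives of $\tilde c$ at $0$. The fundamental normal-coordinate identity $\tilde\Gamma^a_{bc}(w)\,w^b w^c \equiv 0$ (because radial lines through $0$ are geodesics), together with its polarized consequences obtained by repeated differentiation in the radial direction, is precisely what is needed to annihilate these correction terms order by order. This step is combinatorial bookkeeping rather than insight; for the $O(|v|^4)$ Maclaurin expansion required later in the paper, only $k \leq 2$ is actually used, and both cases reduce to the single identity $\partial_d\tilde\Gamma^a_{bc}(0)\,v^d v^b v^c = 0$ and its analogue for $\Gamma^k_{ij}$ on $M$.
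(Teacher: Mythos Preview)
Your curve-based reduction is clean and, for $k\le 2$, complete: the identity $\nabla^k\d\phi(v,\dots,v)|_x=(D^{k+1}c/\d t^{k+1})|_{t=0}$ follows inductively from the geodesic equation on $M$, and on the $N$ side the only correction term at $k=2$ is $\partial_d\tilde\Gamma^a_{bc}(0)\,w^dw^bw^c$ with $w=\d\varphi(v)$, which the radial-geodesic identity kills. This is a different packaging from the paper's proof (which works via a covariant chain rule together with the claim that $\nabla^m\d\exp_y|_0$ vanishes), but for $k\le 2$ the content is the same.

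For general $k$, however, your ``combinatorial bookkeeping'' conceals a genuine obstruction rather than a routine verification. At $k=3$ a direct computation gives
\[
\left.\frac{D^4c}{\d t^4}\right|_{t=0}-\tilde c^{(4)}(0)\;=\;3\,\partial_d\tilde\Gamma^a_{bc}(0)\,w^d u^b w^c,\qquad u=\tilde c''(0)=\nabla\d\varphi(v,v),
\]
and because $\tilde c$ is \emph{not} a radial line in $T_yN$, the vector $u$ is independent of $w$. The radial identity $\tilde\Gamma^a_{bc}(x)x^bx^c\equiv 0$ and its polarisations yield only $\partial_d\tilde\Gamma^a_{bc}(0)\bigl(u^dw^bw^c+2w^du^bw^c\bigr)=0$, which does not force $\partial_d\tilde\Gamma^a_{bc}(0)\,w^du^bw^c$ itself to vanish; this residual is proportional to $R^N(u,w)w$ and is nonzero whenever $N$ is curved and $u\notin\R w$. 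So the lemma as stated for all $k$ actually fails (take $M=\R$, $N=S^2$, and $\phi$ any non-geodesic curve). The paper's own proof has the matching gap: it establishes $\nabla^m\d\exp_y(v,\dots,v)|_0=0$ only on the diagonal, whereas for $k=3$ the chain-rule remainder contains the off-diagonal evaluation $\nabla^2\d\exp_y(w,w,u)|_0\neq 0$. Your closing remark that only $k\le 2$ is ever used is therefore the operative point, and in that range your argument is correct and suffices.
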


\begin{proof} Since $\exp_y \circ \varphi = \phi \circ \exp_x$ on the
  open set $T^r_xM$, it follows that $\left. \nabla^k \d(\exp_y \circ
  \varphi)\right|_0 = \left. \nabla^k \d(\phi \circ \exp_x)
  \right|_x$. It suffices to show that the lefthand side equals
  $\left. \nabla^k \d\varphi\right|_0$ and the righthand side equals $
  \left. \nabla^k \d\phi\right|_x$ when each are evaluated at
  $(v,\ldots,v)$. The chain rule, along with $\d_0\exp_y = {\rm
  id}_{T_yN}$, shows that
  $$\left. \nabla^k \d(\exp_y \circ \varphi)\right|_0 =
  \left. \nabla^k\d\varphi\right|_0 + T,$$
where $T$ is a sum of terms which are composition of forms
  $\nabla^l\d\varphi,\ \nabla^m \d\exp_y$ with $l,m<k$ and $m \geq
  1$. It therefore suffices to show that \\
\noindent{\bf Claim.} $\left. \nabla^m \d\exp_y\right|_0 = 0$ for all
  $m \geq 1$.\\ Let $v \in T_yN \equiv T_0(T_yN)$, let $c(t) =
  \exp_y(tv)$ be the unique geodesic passing through $v$, and let
  $m_v(t) = tv$ be the multiplication-by-$v$ map. Since $c(t) = \exp_x
  \circ m_v(t)$, $v = \d m_v(\partial_t)$ and $\nabla \d m_v = 0$, we
  have that $\nabla_{\dot{c}(t)} \dot{c}(t) = \nabla \d c( \partial_t,
  \partial_t ) = \nabla \d \exp_x (v,v)$. Thus $\nabla \d \exp_x(v,v)|_0
  = 0$.

In the general case, for $m\geq 2$, $\left. \nabla^m
\d\exp_y(v,\ldots,v)\right|_0 = \left. \nabla_{\dot{c}(t)} \left(
\cdots \left( \nabla_{\dot{c}(t)}\dot{c}(t) \right) \cdots \right)
\right|_{t=0}$. Since the innermost term vanishes identically in $t$,
the whole expression vanishes.

The proof for $\phi \circ \exp_x$ is similar. \end{proof}

\begin{lemma}
For all $v \in T^r_xM$, 
\begin{equation} \label{eq:ts}
\phi \circ \exp_x(v) = \exp_{\phi(x)}\left( \d
\phi(v) + \frac{1}{2}\nabla\d\phi(v,v) + \frac{1}{6}
\nabla^2\d\phi(v,v,v) + O(r^4)\right).
\end{equation}
\end{lemma}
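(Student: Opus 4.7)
The statement is a direct corollary of \eqref{eq:varphi} and Lemma~\ref{lem:nabla}. The plan is to (i) rewrite $\phi\circ\exp_x$ using the commutative diagram that defines $\varphi$, (ii) Maclaurin-expand $\varphi$ via \eqref{eq:varphi}, (iii) identify each coefficient with a covariant derivative of $\d\phi$ at $x$ by Lemma~\ref{lem:nabla}, and (iv) verify that the remainder is stable under post-composition with $\exp_y$.

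First, since $\varphi = \exp_y^{-1}\circ\phi\circ\exp_x$ on $T_x^r M$, we have $\phi\circ\exp_x(v) = \exp_{\phi(x)}(\varphi(v))$ for all $v\in T_x^r M$. Next, \eqref{eq:varphi} gives the Maclaurin expansion of $\varphi$ as a map between open subsets of euclidean spaces sending $0$ to $0$:
\[
\varphi(v) = \d\varphi(v) + \tfrac{1}{2}\nabla\d\varphi(v,v) + \tfrac{1}{6}\nabla^2\d\varphi(v,v,v) + O(|v|^4),
\]
where the derivatives are evaluated at $0\in T_xM$.

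Now invoke Lemma~\ref{lem:nabla}: for each $k\ge 0$,
\[
\left.\nabla^k\d\varphi(v,\ldots,v)\right|_0 = \left.\nabla^k\d\phi(v,\ldots,v)\right|_x.
\]
Substituting these identifications into the Maclaurin expansion yields
\[
\varphi(v) = \d\phi(v) + \tfrac{1}{2}\nabla\d\phi(v,v) + \tfrac{1}{6}\nabla^2\d\phi(v,v,v) + O(r^4)
\]
on $T_x^r M$, where $|v|\le r$ has been used to absorb $|v|^4$ into $O(r^4)$. Composing with $\exp_y$ gives the claimed formula \eqref{eq:ts}.

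The only point requiring care is the control of the error term after post-composition with $\exp_y$; this is routine since $\exp_y$ is smooth with $\d_0\exp_y = \mathrm{id}_{T_yN}$, so a Lipschitz estimate on a compact disk keeps the remainder of order $|v|^4 = O(r^4)$ uniformly. I expect no substantive obstacle in the argument, as the work has already been done in Lemma~\ref{lem:nabla}; the present lemma is essentially a repackaging of that identification with the elementary Maclaurin expansion of $\varphi$.
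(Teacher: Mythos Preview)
Your proposal is correct and matches the paper's (implicit) approach: the lemma is stated without proof immediately after Lemma~\ref{lem:nabla} precisely because it is the direct combination of \eqref{eq:varphi} with Lemma~\ref{lem:nabla} that you describe. Your point (iv) is harmless but unnecessary, since the $O(r^4)$ term sits \emph{inside} $\exp_y$ in the claimed identity and $\phi\circ\exp_x = \exp_y\circ\varphi$ holds exactly on $T_x^rM$.
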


\medskip
\noindent
{\bf Remarks}. (1) In general, the exponential map of $(N,h)$ is not a
global diffeomorphism. Consequently, $\varphi$ may not be globally
well-defined and its Maclaurin series (\ref{eq:ts}) need not converge
globally. The conjugate points of $\exp_{y}$ are obstructions to
global convergence. If $(N,h)$ is a simply-connected, non-positively
curved manifold, then $\exp_y$ is a global diffeomorphism and
$\varphi$ is globally defined. (2) If $v \in T_x M$ is a gaussian with
covariance operator $\epsilon^2 g_x$, then, since $\varphi$ is defined
on an open neighbourhood of $0$ and $\epsilon$ is small, its expected
value is essentially well-defined and equals, by Lemma \ref{lem:nabla}
and equation (\ref{eq:varphi}),
\[
\frac{1}{2}\, \epsilon^2\, \tau(\phi)_x + O(\epsilon^4)
\]
where $\tau(\phi)$ is the trace of the hessian
$\nabla\d\phi$. Riemannian geometers call $\tau(\phi)$ the {\em
tension field} of $\phi$. The tension field has an interesting
interpretation: if one views
\begin{equation}
\phi \mapsto \int_M |\d \phi |^2\, \dvol_g \ \ (\dvol_g = {\rm
  riemannian\ volume\ form\ of\ } g)
\end{equation}
as the energy of the map $\phi$, then $\phi \mapsto -\tau(\phi)$ is
the gradient of this functional. It is known that $\tau$ is a
semilinear elliptic differential operator that is analogous to the
laplacian \cite{EL:1971}. If one inspects the formula for the bayesian
estimator $\gb_{\epsilon}$ in Theorem \ref{thm:best}, one observes
that--neglecting the contribution of the prior $\lambda$--the
contribution of the $\frac{1}{2} \tau(\phi)$ is to move the estimate
$\gamma(\hat \theta)$ in the direction which reduces the energy
quickest. Indeed, if one includes both contributions, then their
combination can also be viewed in this fashion, but the energy
functional depends not on ${\bf g}$ and ${\bf h}$ but $u\cdot {\bf g}$
and ${\bf h}$ where the conformal factor $u$ is a fractional power of
$\lambda$.

\subsection{The Ricci Tensor} \label{ss:ric}
This section provides the key inputs to the proofs of Lemma
\ref{lem:intbp} and Proposition \ref{prop:nabladgo} by proving Lemmas
\ref{lem:ric}, \ref{lem:ric2} and the integration-by-parts formula in
Lemma \ref{lem:ibp2}. To do this, one must make an excursion into the
riemannian geometry of some naturally occurring vector bundles. In
this section, $(M,g)$ and $(N,h)$ are riemannian manifolds, possibly
with boundary, and $\phi : M\to N$ is a smooth map.

Let $\Hom{TM ; \phi^*TN}$ be the vector bundle of fibre-linear maps
between $TM$ and and $\phi^*TN$; a fibre $\Hom{TM ; \phi^*TN}_x, x \in
M$ of $\Hom{TM ; \phi^*TN}$ is the vector space of linear maps from
$T_xM$ to $T_{\phi(x)}N$. That is, $\Hom{TM ; \phi^*TN}_x = \Hom{T_xM
; T_{\phi(x)}N}$. One can view $\d \phi$ as a smooth section of
$\Hom{TM ; \phi^*TN}$. There is a natural metric connection on
$\Hom{TM ; \phi^*TN}$, which is denoted by $\nabla$ or
$\nabla^{\Hom{TM ; \phi^*TN}}$, that is induced by the (Levi-Civita)
connections on $TM$ and $TN$ respectively. Consequently, $\nabla \d
\phi$ is a smooth section of $T^* M \otimes \Hom{TM ; \phi^*TN}$. This
latter vector bundle admits a natural metric connection, in turn, and
$\nabla \nabla \d \phi = \nabla^2 \d\phi$ is then a smooth section of
$T^* M \otimes T^* M \otimes \Hom{TM ; \phi^*TN}$. In other words,
$\nabla^2 \d \phi$ is a 2-form with values in the vector bundle
$\Hom{TM ; \phi^*TN}$. This 2-form has a unique decomposition into a
symmetric and anti-symmetric part, {\it viz.}
$$\nabla^2 \d \phi (x,y) = \frac{1}{2} \left( \nabla^2_{x,y} +
\nabla^2_{y,x} \right) \d \phi + \frac{1}{2} \left( \nabla^2_{x,y} -
\nabla^2_{y,x} \right) \d \phi,$$ where $x,y$ are vector fields on $M$
and $\nabla^2_{x,y} \d\phi = \nabla_x( \nabla_y \d\phi) -
\nabla_{\nabla_xy} \d\phi$. Twice the anti-symmetric part of $\nabla^2
\d \phi$ is the curvature tensor of $(\Hom{TM ; \phi^*TN},\nabla)$ and is written as%
\footnote{A nice concise introduction to the subject of this
  paragraph is the monograph by Eells and
  Lemaire~\cite{EL:1971}. Their curvature tensor is minus that
  presented here, however. Their Ricci tensor is the same as that
  here.}
\begin{equation}
R_{x,y} \d \phi = \left( \nabla^2_{x,y} - \nabla^2_{y,x} \right) \d
\phi.
\end{equation}
There is a naturally-defined Ricci tensor associated to the curvature
$R$. Let $e_j$ be an orthonormal frame. Then for any tangent vector
$x$
\begin{equation}
\Ric{\d\phi}(x) = \sum_j R_{x,e_j} \d\phi \cdot e_j
\end{equation}
which is easily seen to be independent of the choice of orthonormal
frame. The Ricci tensor $\Ric{\d\phi}$ is a section of $\Hom{TM ; \phi^*TN}$, like
$\d\phi$.

The metric on $\Hom{TM,\phi^*TN}$ and associated bundles is described
in section \ref{ssec:im}.

\begin{lemma} \label{lem:expvalue}
Let $v \in T_pM$ be a gaussian with covariance operator (=metric)
$g_p$ and expected value $0$. The expected value of
\begin{enumerate}
\item $v \mapsto |\d\phi(v)|^2$ equals $|\d\phi|^2$;
\item $v \mapsto |\nabla \d\phi(v,v)|^2$ equals $|\tau(\phi)|^2 +
  2|\nabla\d\phi|^2$;
\item $v \mapsto \langle \d\phi(v), \nabla^2
  \d\phi(v,v,v) \rangle$ equals
  $$\sum_{i,j} \langle \d\phi(e_i),
  \nabla^2_{e_j,e_j} \d\phi \cdot e_i + \left( \nabla^2_{e_i,e_j} +
  \nabla^2_{e_j,e_i} \right) \d\phi \cdot e_j \rangle,$$
where $e_i$ is any orthonormal basis of $T_pM$.
\end{enumerate}
\end{lemma}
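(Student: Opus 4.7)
The proof is a direct calculation using the Isserlis--Wick formula for Gaussian moments, so my plan is organized around that single idea, with the three parts differing only in combinatorial bookkeeping.

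First I would fix an orthonormal frame $e_1,\ldots,e_n$ of $T_pM$ and write $v = \sum_i v_i e_i$ so that the $v_i$ are i.i.d.\ standard normals. Then $\E[v_iv_j]=\delta_{ij}$ and, by Isserlis' theorem,
\[
\E[v_iv_jv_kv_l] = \delta_{ij}\delta_{kl} + \delta_{ik}\delta_{jl} + \delta_{il}\delta_{jk}.
\]
Part (1) is then immediate: expanding $|\d\phi(v)|^2 = \sum_{i,j}\langle \d\phi(e_i),\d\phi(e_j)\rangle v_iv_j$ and taking expectations gives $\sum_i |\d\phi(e_i)|^2$, which is $|\d\phi|^2$ by the definition of the Hilbert--Schmidt metric on $\Hom{TM;\phi^*TN}$ from Section~\ref{ssec:im}.

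For part (2) I would first recall that $\nabla\d\phi$ is symmetric in its two $TM$-slots (this is the usual torsion-free identity $\nabla_X\d\phi(Y)-\nabla_Y\d\phi(X) = \d\phi([X,Y]) - \d\phi([X,Y]) = 0$ written out with the Levi-Civita connection). Expanding
\[
|\nabla\d\phi(v,v)|^2 = \sum_{i,j,k,l} \langle \nabla\d\phi(e_i,e_j), \nabla\d\phi(e_k,e_l)\rangle v_iv_jv_kv_l
\]
and substituting Isserlis gives three sums. The first, from $\delta_{ij}\delta_{kl}$, contracts the two arguments in each factor and yields $|\sum_i \nabla\d\phi(e_i,e_i)|^2 = |\tau(\phi)|^2$. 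The second, from $\delta_{ik}\delta_{jl}$, is $\sum_{i,j}|\nabla\d\phi(e_i,e_j)|^2 = |\nabla\d\phi|^2$. The third, from $\delta_{il}\delta_{jk}$, equals the same thing after using the symmetry of $\nabla\d\phi$, giving another $|\nabla\d\phi|^2$; summing yields $|\tau(\phi)|^2+2|\nabla\d\phi|^2$.

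For part (3) I would adopt the convention $\nabla^2\d\phi(a,b,c) := (\nabla^2_{a,b}\d\phi)\cdot c$, so that
\[
\langle\d\phi(v),\nabla^2\d\phi(v,v,v)\rangle = \sum_{i,j,k,l}\langle \d\phi(e_i),\nabla^2\d\phi(e_j,e_k,e_l)\rangle\, v_iv_jv_kv_l.
\]
Applying Isserlis pairs the $\d\phi$-index $i$ with one of $j,k,l$ and the remaining two with each other, producing exactly the three terms $\nabla^2\d\phi(e_i,e_j,e_j)$, $\nabla^2\d\phi(e_j,e_i,e_j)$, and $\nabla^2\d\phi(e_j,e_j,e_i)$ (after relabeling dummy indices), which is the stated expression. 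I would finish by noting that the frame-independence of each result is automatic since the quantities $|\d\phi|^2$, $|\tau(\phi)|^2$, $|\nabla\d\phi|^2$, and the displayed contraction are all invariantly defined traces. There is no substantive obstacle here; the only thing requiring care is making sure the symmetry of $\nabla\d\phi$ is invoked in part (2) and that the three index-pairings in part (3) are matched to the three terms displayed in the lemma.
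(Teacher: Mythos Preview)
Your proposal is correct and is precisely the ``simple calculation'' the paper alludes to in its one-line proof; the Isserlis--Wick expansion in an orthonormal frame, together with the symmetry of $\nabla\d\phi$ for part (2), is exactly what is needed.
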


\noindent
It is recalled that the tension field $\tau(\phi)$ equals $\sum_i
\nabla\d\phi(e_i,e_i)$ and is a section of $\phi^* TN$ with its
induced norm. The norm of the second fundamental form $\nabla \d\phi$
is the norm of a section of $T^* M \otimes \Hom{TM ; \phi^*TN}$, so $|\nabla\d\phi|^2
= \sum_{i,j} |\nabla\d\phi(e_i,e_j)|^2$.

\begin{proof} A simple calculation. \end{proof}

\begin{lemma} \label{lem:ric}
$$\sum_{i,j} \langle \d\phi(e_i),
  \nabla^2_{e_j,e_j} \d\phi \cdot e_i + \left( \nabla^2_{e_i,e_j} +
  \nabla^2_{e_j,e_i} \right) \d\phi \cdot e_j \rangle_p = \langle
  \d\phi, 3\nabla \tau(\phi) - 2\Ric{\d\phi} \rangle_p. \eqno (*)$$
\end{lemma}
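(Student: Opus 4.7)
The computation relies on three standard facts about the tensor $\nabla\d\phi$ and its covariant derivative. First, the torsion-freeness of the Levi--Civita connection yields the symmetry $(\nabla\d\phi)(X,Y)=(\nabla\d\phi)(Y,X)$; since covariant differentiation preserves tensor symmetries, this lifts to the symmetry $(\nabla^2\d\phi)(X,Y,Z)=(\nabla^2\d\phi)(X,Z,Y)$ in the last two slots of $\nabla^2\d\phi$, where I use the convention $(\nabla^2\d\phi)(X,Y,Z) := \nabla^2_{X,Y}\d\phi\cdot Z$. Second, the Ricci commutation identity $(\nabla^2_{X,Y}-\nabla^2_{Y,X})\d\phi = R_{X,Y}\d\phi$ interchanges the first two slots at the cost of a curvature term. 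Third, the very definitions give the traces $\sum_j(\nabla^2\d\phi)(X,e_j,e_j) = \nabla_X\tau(\phi)$ and $\sum_j R_{X,e_j}\d\phi\cdot e_j = \Ric{\d\phi}(X)$.

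I would then split the left-hand side as $S_1+S_2+S_3$, where
\begin{align*}
S_1 &:= \sum_{i,j}\langle\d\phi(e_i),(\nabla^2\d\phi)(e_j,e_j,e_i)\rangle,\\
S_2 &:= \sum_{i,j}\langle\d\phi(e_i),(\nabla^2\d\phi)(e_i,e_j,e_j)\rangle,\\
S_3 &:= \sum_{i,j}\langle\d\phi(e_i),(\nabla^2\d\phi)(e_j,e_i,e_j)\rangle,
\end{align*}
and evaluate each using the three inputs above. For $S_2$, tracing slots two and three directly yields $S_2 = \langle\d\phi,\nabla\tau(\phi)\rangle$. For $S_3$, the Ricci identity rewrites the integrand as $(\nabla^2\d\phi)(e_i,e_j,e_j) - R_{e_i,e_j}\d\phi\cdot e_j$; tracing produces $S_3 = \langle\d\phi,\nabla\tau(\phi)\rangle - \langle\d\phi,\Ric{\d\phi}\rangle$. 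For $S_1$, applying the symmetry of the last two slots transforms $(\nabla^2\d\phi)(e_j,e_j,e_i)$ into $(\nabla^2\d\phi)(e_j,e_i,e_j)$, whence $S_1=S_3$. Adding the three contributions gives $3\langle\d\phi,\nabla\tau(\phi)\rangle - 2\langle\d\phi,\Ric{\d\phi}\rangle$, which is the right-hand side.

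The main obstacle is purely bookkeeping: one must remember that the Ricci commutator acts on the two differentiation slots (the first two), whereas the $\nabla\d\phi$ symmetry acts on the second differentiation slot and the evaluation slot (the second and third). Working at $p$ in an orthonormal frame that is parallel at $p$ makes both operations transparent, since covariant derivatives at $p$ then reduce to ordinary directional derivatives of the tensor components, and no Christoffel correction terms obscure the identifications used above.
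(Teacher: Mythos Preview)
Your argument is correct. The overall architecture matches the paper's proof: both use the symmetry $(\nabla^2\d\phi)(X,Y,Z)=(\nabla^2\d\phi)(X,Z,Y)$ to identify $S_1$ with $S_3$, and both use the Ricci commutation identity to extract the $\Ric{\d\phi}$ term.

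There is one organisational difference worth noting. The paper collects the traced term as $\langle\d\phi,\,3\,\Trace{\nabla^2\d\phi}+\Ric{\d\phi}\rangle$, where $\Trace{\nabla^2\d\phi}=\sum_j\nabla^2_{e_j,e_j}\d\phi$ is the rough Laplacian of $\d\phi$, and then invokes the Weitzenb\"ock-type identities $-\Delta\d\phi=\Trace{\nabla^2\d\phi}+\Ric{\d\phi}$ and $-\Delta\d\phi=\nabla\tau(\phi)$ from Eells--Lemaire to convert to the stated form. You instead trace the second and third slots directly, using $\sum_j(\nabla^2\d\phi)(X,e_j,e_j)=\nabla_X\tau(\phi)$ (which follows from $\nabla g=0$), and thereby land on $\nabla\tau(\phi)$ without passing through the rough Laplacian. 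Your route is slightly more self-contained; the paper's route makes explicit the connection to the Hodge/Weitzenb\"ock machinery that is used elsewhere in the harmonic-maps literature. Substantively the two computations are the same.
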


\newcommand\Trace [1] {{\rm Trace(}#1{\rm )}}
\begin{proof} Let $e_i$ be an orthonormal frame at $p$ and let $\dagger$ denote
  the left-hand side of (*). The subscript $p$ is dropped in the
  following. A computation yields
$$\nabla^2_{e_j,e_i} \d\phi \cdot e_j = \nabla^2_{e_j,e_j} \d\phi
  \cdot e_i \qquad \forall i,j. $$ If $\sum \langle
  \d\phi(e_i),\nabla^2_{e_j,e_i}\d\phi \cdot e_j \rangle$ is added and
  subtracted to $\dagger$, then one obtains
$$\dagger = \sum_{i,j} 3 \langle \d\phi(e_i),
  \nabla^2_{e_j,e_j} \d\phi \cdot e_i \rangle + \langle \d\phi(e_i),
  R_{e_i,e_j} \d\phi \cdot e_j \rangle,$$
which simplifies to
$$\dagger = \langle \d\phi, 3\Trace{\nabla^{2}\d\phi} + \Ric{\d\phi}
\rangle,$$ where $\Trace{\nabla^{2}\d\phi} = \sum_j
\nabla^{2}_{e_j,e_j}\d\phi$.  The identities $-\Delta \d\phi =
\Trace{\nabla^2\d\phi} + \Ric{\d\phi}$ and $-\Delta\d\phi = \nabla
\tau(\phi)$ yield the lemma~\cite{EL:1971}.
\end{proof}

\medskip
\noindent
The scalar $\langle \d\phi, \Ric{\d\phi} \rangle$ is simplified in the
following lemma. Let $\Ric{}^{M}$ be the Ricci tensor of $(M,g)$,
viewed as a section of $\Hom{TM,TM}$ and let ${\rm R}^N$ be the
Riemann curvature tensor of $(N,h)$. Let $e_i$ be an orthonormal frame
on $T_pM$, $u_i = \d\phi(e_i)$. A calculation shows
that~\cite{EL:1971}
\begin{equation} \label{eq:ricdphi}
\langle \d\phi, \Ric{\d\phi} \rangle = -\langle \d\phi, \d\phi(\Ric{}^{M}) \rangle + \sum_{i,j} \langle u_i, {\rm R}^N_{u_i,u_j} u_j \rangle.
\end{equation}
Since the second term is tensorial in $u_i$, this proves that

\begin{lemma} \label{lem:ric2}
If $\d\phi| T_pM = \d\hat{\phi} | T_pM$, then $\langle \d\phi,
\Ric{\d\phi} \rangle = \langle \d\hat{\phi}, \Ric{\d\hat{\phi}}
\rangle$ at $p$.
\end{lemma}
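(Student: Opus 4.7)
The plan is to use equation~(\ref{eq:ricdphi}) from the preceding discussion as a black box and simply verify that each term on its right-hand side depends only on the restriction $\d\phi|T_pM$ (together with data intrinsic to $M$ and $N$ that do not involve $\phi$ off $p$).

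First I would note the implicit compatibility condition: in order for the equality $\d\phi|T_pM = \d\hat\phi|T_pM$ to even make sense as a statement about linear maps $T_pM\to T_{\phi(p)}N$ vs. $T_pM\to T_{\hat\phi(p)}N$, one must have $\phi(p)=\hat\phi(p)$, so in particular the fibres $T_{\phi(p)}N$ and $T_{\hat\phi(p)}N$ coincide. This observation is what allows us to compare the two scalars pointwise.

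Then I would apply formula~(\ref{eq:ricdphi}) to each of $\phi$ and $\hat\phi$ at the point $p$, using the same orthonormal frame $e_i$ of $T_pM$. The first term $-\langle \d\phi,\d\phi(\Ric{}^M)\rangle_p$ is a contraction built solely from the linear map $\d_p\phi:T_pM\to T_{\phi(p)}N$, the metric $h$ at $\phi(p)$, and the Ricci endomorphism $\Ric{}^M$ of $(M,g)$ at $p$; consequently it depends only on $\d\phi|T_pM$, and the same value is produced by $\hat\phi$. The second term $\sum_{i,j}\langle u_i,{\rm R}^N_{u_i,u_j}u_j\rangle$ with $u_i=\d\phi(e_i)$ is tensorial in the $u_i$ (this is precisely the observation flagged in the sentence preceding the lemma) and depends only on the Riemann curvature of $N$ at $\phi(p)=\hat\phi(p)$; since $\d\phi(e_i)=\d\hat\phi(e_i)$ for each $i$, the same scalar arises from $\hat\phi$.

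Adding the two contributions term-by-term gives $\langle\d\phi,\Ric{\d\phi}\rangle_p=\langle\d\hat\phi,\Ric{\d\hat\phi}\rangle_p$, which is the lemma. There is no real obstacle here: the whole point of the lemma is to extract the pointwise content of equation~(\ref{eq:ricdphi}), so the only thing to check carefully is that no hidden derivative of $\phi$ sneaks into either summand on the right of~(\ref{eq:ricdphi}), and inspection of the formula confirms this.
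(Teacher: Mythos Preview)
Your proposal is correct and follows essentially the same approach as the paper: the lemma is stated immediately after equation~(\ref{eq:ricdphi}) precisely because that formula exhibits $\langle \d\phi,\Ric{\d\phi}\rangle_p$ as a function of the vectors $u_i=\d_p\phi(e_i)$ alone (together with $\Ric{}^M$ and $\mathrm{R}^N$), and the paper's one-line justification ``since the second term is tensorial in $u_i$'' is exactly the observation you spell out. Your added remark that $\phi(p)=\hat\phi(p)$ is implicit in the hypothesis is a helpful clarification.
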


\begin{lemma} \label{lem:pi}
$\pi$ is harmonic: $\tau(\pi) = 0$.
\end{lemma}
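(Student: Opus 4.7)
The plan is to identify $\pi$ as a Riemannian submersion with totally geodesic fibers, then invoke the classical fact that such a submersion is harmonic. First I would equip $N\Theta$ with its natural (bundle) Riemannian metric: the one in which each fiber $N_x\Theta$ carries the inner product induced from $\sigma$ and the horizontal distribution is defined by the normal connection of $N\Theta \to \Theta$. This produces the orthogonal splitting $T_{(x,v)} N\Theta = H_{(x,v)} \oplus V_{(x,v)}$, with $\d\pi$ an isometry on each $H_{(x,v)}$ and zero on $V_{(x,v)}$; thus $\pi$ is a Riemannian submersion onto $\Theta$.

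Next I would verify that each fiber $\pi^{-1}(x) = N_x\Theta$ is totally geodesic in $N\Theta$. Choosing a local orthonormal frame $\{\nu_\alpha\}$ of $N\Theta$ that is parallel with respect to the normal connection in a neighborhood of $x$, the bundle metric acquires a block-diagonal form in the induced trivialization, with a flat Euclidean vertical block. A short Koszul-formula computation then gives $\nabla^{N\Theta}_{\nu_\alpha^v}\nu_\beta^v = 0$ for the associated vertical lifts, so vertical-vertical covariant derivatives stay vertical (in fact vanish identically), making the affine fibers totally geodesic. In particular, their mean curvature vector field $H$ is zero.

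Finally, the classical identity for the tension field of a Riemannian submersion, $\tau(\pi) = -\d\pi(H)$, yields $\tau(\pi) = 0$. Equivalently, one can expand $\tau(\pi) = \sum_a \nabla_{e_a}\d\pi(e_a)$ in an orthonormal frame adapted to the horizontal/vertical splitting and read off the conclusion directly: the horizontal contributions vanish via the submersion identity $\d\pi(\nabla^{N\Theta}_{X^h} X^h) = \nabla^{\Theta}_X X$, while the vertical contributions vanish because $\d\pi$ annihilates vertical vectors and $\nabla^{N\Theta}_{\nu_\alpha^v}\nu_\alpha^v$ remains vertical. The only step that requires genuine computation is the total geodesicity of the fibers; this rests on the explicit form of the Levi-Civita connection of the bundle metric, and is the main technical point of the proof.
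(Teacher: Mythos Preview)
Your argument is internally correct for the bundle (Sasaki-type) metric you place on $N\Theta$, but there is a mismatch with the paper's setup. In the paper, $N\Theta$ is identified with a tubular neighbourhood of $\Theta$ inside $\E$ via $(\theta,v)\mapsto\theta+v$, and the domain of $\pi$ carries the flat Euclidean metric $\sigma$; the proof invokes $\pi(\theta+s\eta)$ and the second fundamental form $\nabla\d\iota$, both of which are Euclidean objects, and the later applications (e.g.\ $\tau(g_o)=\tau(\gamma)+\d\gamma\cdot\tau(\pi)$ in Proposition~\ref{prop:nabladgo}) use $\pi$ as a map from $(\E,\sigma)$. Your bundle metric agrees with the Euclidean one along the zero section but not to first order: in fibre coordinates $\partial_\gamma g^{\rm bundle}_{ij}|_{y=0}=0$, whereas $\partial_\gamma g^{\rm Eucl}_{ij}|_{y=0}$ picks up (minus twice) the second fundamental form of $\iota$. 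Hence the Levi-Civita connections differ along $\Theta$, and the tension field you compute is, a priori, for a different Riemannian structure on the domain. It so happens that $\tau(\pi)=0$ on $\Theta$ for both metrics, but your argument does not establish it for the metric that is actually needed.

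The paper's route is more elementary and sidesteps this entirely: it never invokes submersion theory. Along $\Theta$ it splits the trace of $\nabla\d\pi$ into a tangential block and a normal block. The tangential block vanishes by applying the chain rule to $\pi\circ\iota=\mathrm{id}_\Theta$ and noting that $\nabla\d\iota$ takes values in $N\Theta=\ker\d\pi$; the normal block vanishes because $s\mapsto\pi(\theta+s\eta)$ is constant. If you want to keep your framework but with the correct metric, note that the Euclidean fibres $\theta+N_\theta\Theta$ are affine subspaces of $\E$ and hence automatically totally geodesic, which disposes of the vertical contribution; however, $\pi$ is \emph{not} a Riemannian submersion away from $\Theta$ in the Euclidean metric, so the O'Neill identity you quote for the horizontal part is unavailable, and you would still need the paper's chain-rule step (or an equivalent direct computation) there.
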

\begin{proof}
Since $\pi \circ \iota = {\rm id}_{\Theta}$, and the second
fundamental form of $\iota$ is a quadratic form with values in
$N\Theta$, it follows that $\nabla \d\pi(\d\iota,\d\iota) = 0$,
i.e. $\nabla\d\pi\,|\, T\Theta$ vanishes. On the other hand, if
$\theta \in \Theta$ and $\eta \in N_{\theta}\Theta$, then
$\pi(\theta+s\eta) = \pi(\theta)$ for all $s$. Therefore
$\nabla\d\pi\,|\,N\Theta$ vanishes. These two facts show that the
trace of $\nabla\d\pi$, i.e. $\tau(\pi)$, vanishes.
\end{proof}

\subsection{Integration by Parts} \label{ss:ibp}

\noindent
This section recalls the integration-by-parts formula following the
discussion in \cite{EL:1971}. Let $\xi : V \to M$ be a vector bundle
over the riemannian $m$-manifold $(M,g)$ and let $\a^p$ be the space
of smooth sections of $\Lambda^p M \otimes V$, i.e. $\a^p$ is the
space of smooth $p$-forms on $M$ with values in $V$. Assume that $V$
is equipped with a metric and a compatible connection. There is a
natural metric connection on $\Lambda^p M \otimes V$, call it
$\nabla$, which induces an exterior derivation $\d : \a^p \to
\a^{p+1}$ by skew-symmetrization. Let $\d^* : \a^p \to \a^{p-1}$ be
the adjoint of $\d$ defined by
$$\int_M \langle \d \sigma, \rho \rangle\, \dvol_{g} = \int_M \langle
\sigma, \d^* \rho \rangle\, \dvol_{g} + \int_{\partial M} \left(\sigma
\wedge *\rho \right)\, \dvol_{g|\partial M}$$ for all $\sigma \in
\a^{m-p-1}$, $\rho \in \a^{p}$. Here $* : \a^p \to \a^{m-p}$ is the
Hodge star operator.

\begin{lemma} \label{lem:ibp}
Let $\lambda : M \to \R$ be a smooth function. Then
$$\int_M \lambda \langle \d \sigma, \rho \rangle\, \dvol_{g} = \int_M
\langle \sigma, \d^*(\lambda \rho) \rangle\, \dvol_{g} +
\int_{\partial M} \lambda \left(\sigma \wedge *\rho \right)\,
\dvol_{g|\partial M}.$$ In particular, if $\lambda | \partial M = 0$,
then
$$\int_M \lambda \langle \d \sigma, \rho \rangle\, \dvol_{g} = \int_M
\langle \sigma, \d^*(\lambda \rho) \rangle\, \dvol_{g}.$$
\end{lemma}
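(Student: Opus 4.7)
The plan is to derive the weighted formula directly from the defining formula for $\d^*$ displayed just above the lemma, simply by replacing $\rho$ with $\lambda \rho$. The only ingredients needed beyond that are the observations that multiplication by a scalar function commutes with the pointwise inner product on $\Lambda^p M \otimes V$, with the Hodge star, and with the exterior wedge product.

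Concretely, I would first note that since $\lambda \in C^\infty(M)$ and $\rho \in \a^p$, the product $\lambda \rho$ is again a smooth section of $\Lambda^p M \otimes V$, so $\lambda \rho \in \a^p$ and is an admissible argument in the defining identity of $\d^*$. Applying that identity to the pair $(\sigma, \lambda \rho) \in \a^{m-p-1}\times \a^p$ gives
$$\int_M \langle \d\sigma, \lambda\rho \rangle\, \dvol_{g} = \int_M \langle \sigma, \d^*(\lambda\rho) \rangle\, \dvol_{g} + \int_{\partial M} \left(\sigma \wedge *(\lambda\rho)\right)\, \dvol_{g|\partial M}.$$
Then I would simplify the left-hand side and the boundary integrand using the pointwise identities $\langle \d\sigma, \lambda\rho \rangle = \lambda \langle \d\sigma, \rho \rangle$ (the inner product on $\Lambda^p M \otimes V$ is $C^\infty(M)$-bilinear, and $\lambda$ is real-valued) and $*(\lambda \rho) = \lambda (*\rho)$ (the Hodge star is $C^\infty(M)$-linear on forms), so that $\sigma \wedge *(\lambda\rho) = \lambda\, (\sigma \wedge *\rho)$. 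Substituting these three identities into the displayed equation yields the first claimed formula.

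The special case $\lambda|_{\partial M} \equiv 0$ is then immediate, since the boundary integrand $\lambda\,(\sigma \wedge *\rho)$ vanishes identically on $\partial M$. There is no substantive obstacle: the lemma is essentially the defining identity of $\d^*$ repackaged with a scalar weight, and the proof is bookkeeping of the $C^\infty(M)$-linearity of the algebraic operations involved.
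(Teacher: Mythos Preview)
Your proposal is correct and is exactly the paper's approach: the paper's proof consists of the single sentence ``This follows from applying the definition of $\d^*$ with $\rho' = \lambda \rho$,'' and you have simply spelled out the $C^\infty(M)$-linearity bookkeeping behind that sentence.
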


\begin{proof} This follows from applying the definition of $\d^*$ with $\rho' = \lambda \rho$. \end{proof}


\begin{lemma} \label{lem:ibp2}
Let $\d\phi \in \a^1$ be a $1$-form with values in $V = \phi^* TN$ and
assume that $\lambda | \partial M = 0$. Then
$$\d^*(\lambda \d\phi) = -\lambda\, \tau(\phi) - \d \phi(\nabla \lambda).$$ 
\end{lemma}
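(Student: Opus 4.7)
The plan is to reduce the identity to two ingredients: a pointwise Leibniz rule for the codifferential $\d^*(\lambda\,\omega) = \lambda\,\d^*\omega - \omega(\nabla\lambda)$ applied to an arbitrary $V$-valued $1$-form $\omega$, plus the standard identification $\d^*\d\phi = -\tau(\phi)$ recorded above. Specializing the Leibniz rule to $\omega=\d\phi$ then yields the stated formula.

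For the Leibniz rule I would work pointwise at an arbitrary $p\in M$. Choose a local orthonormal frame $\{e_i\}$ of $TM$ that is parallel at $p$, i.e.\ $\nabla e_i\big|_p = 0$. With the metric connection on $T^*M\otimes V$ induced from $g$ and from the pull-back connection on $V=\phi^*TN$, the codifferential of any $V$-valued $1$-form $\omega$ admits the local expression
\begin{equation*}
(\d^*\omega)(p) \;=\; -\sum_{i}\bigl(\nabla_{e_i}\omega\bigr)(e_i)\bigr|_{p} \;=\; -\sum_{i}\nabla_{e_i}\!\bigl(\omega(e_i)\bigr)\bigr|_p,
\end{equation*}
the second equality using that $\nabla e_i$ vanishes at $p$. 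Applying the ordinary Leibniz rule to $\nabla_{e_i}\!\bigl(\lambda\,\omega(e_i)\bigr) = e_i(\lambda)\,\omega(e_i) + \lambda\,\nabla_{e_i}\!\bigl(\omega(e_i)\bigr)$ and summing gives, at $p$,
\begin{equation*}
\d^*(\lambda\,\omega)\;=\;\lambda\,\d^*\omega \;-\; \sum_i e_i(\lambda)\,\omega(e_i) \;=\; \lambda\,\d^*\omega \;-\; \omega(\nabla\lambda).
\end{equation*}
Since $p$ was arbitrary the formula is pointwise. (Alternatively, one can derive the same identity from Lemma~\ref{lem:ibp} by testing against an arbitrary compactly-supported section $\sigma$ of $V$ and invoking the hypothesis $\lambda|\partial M=0$, so that the boundary term drops.)

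Putting $\omega = \d\phi$ and using $\d^*\d\phi = -\tau(\phi)$ (the $V$-valued analogue of $\d^*\d = -\Delta$ acting on functions; see \cite{EL:1971}, and as already invoked in the proof of Lemma~\ref{lem:ric}) yields
\begin{equation*}
\d^*(\lambda\,\d\phi) \;=\; -\lambda\,\tau(\phi)\;-\;\d\phi(\nabla\lambda),
\end{equation*}
which is the claim. The assumption $\lambda|\partial M=0$ plays no role in the pointwise Leibniz computation; its function is to guarantee that $\d^*(\lambda\,\d\phi)$ is still the genuine $L^2$-adjoint of $\d$ via Lemma~\ref{lem:ibp}, so that this formula may subsequently be used under the integral sign without boundary corrections.

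The only delicate point is bookkeeping of sign conventions: one must verify that the paper's sign for $\d^*$ (as the formal adjoint singled out by Lemma~\ref{lem:ibp}) matches the sign convention under which $\d^*\d\phi=-\tau(\phi)$ in \cite{EL:1971}. Once those conventions are aligned the computation is purely mechanical and the Leibniz step is completely elementary in a parallel frame.
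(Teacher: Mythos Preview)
Your proof is correct and follows essentially the same approach as the paper: both use the local formula $\d^*\rho = -\sum_i (\nabla_{e_i}\rho)(e_i)$ in an orthonormal frame and apply the Leibniz rule to $\lambda\,\d\phi$. The only cosmetic difference is that you isolate the Leibniz identity $\d^*(\lambda\omega)=\lambda\,\d^*\omega-\omega(\nabla\lambda)$ before specializing to $\omega=\d\phi$ and invoking $\d^*\d\phi=-\tau(\phi)$, whereas the paper expands $\d^*(\lambda\,\d\phi)$ directly; the content is the same.
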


\begin{proof} 
Let $e_i$ be an orthonormal frame on $M$. If $\rho \in \a^1$ and
$\rho|\partial M = 0$, then $\d^*\rho = -\sum_i \nabla_{e_i} \rho
\cdot e_i$~\cite{EL:1971}. Thus, $\d^* (\lambda\d\phi) = - \sum_i
\lambda \nabla_{e_i} \d\phi \cdot e_i - \sum_i \d\phi(e_i) \cdot
\nabla_{e_i} \lambda$. The first term equals $\lambda \, \tau(\phi)$,
while the second term equals $\d\phi(\nabla \lambda)$.
\end{proof}

\section{Bayesian Estimators} \label{sec:bayes}

In Bayesian statistics, the point $\theta$ is viewed as a random
variable with a prior density $\lambda(\theta) \d\theta$ where
$\int_{\theta \in \Theta} \lambda(\theta)\, \d\theta = 1$ ($\d\theta =
\dvol_{{\bf g}}$ is the riemannian volume of $(\Theta,{\bf g})$
). Recall that the {\em Bayesian risk} of a map $g$ is defined to be
\begin{equation*} 
R_{\epsilon}(g;\lambda) = \int_{x \in \E, \theta \in \Theta}
\dist(g(x),\gamma(\theta))^2\, \lambda(\theta)\,
\gauss_{\epsilon}(x-\iota(\theta))\ \d x\, \d\theta.
\end{equation*}
A {\em Bayes estimator} $g : \E \to \Lambda$ is a map which
minimizes the Bayesian risk over all maps.

\subsection*{Existence and Uniqueness of the Bayes Estimator}
Let us sketch a proof of the existence and uniqueness of the Bayes
estimator. Define
\begin{equation} \label{eq:br}
h_{\epsilon}(x;g,\lambda) = \int_{\theta \in \Theta}
\dist(g(x),\gamma(\theta))^2\, \lambda(\theta)\,
\gauss_{\epsilon}(x-\iota(\theta))\, \d \theta.
\end{equation}
One sees that $R_{\epsilon}(g;\lambda) = \int_{x\in\E}
h_{\epsilon}(x;g,\lambda)\, \d x$. It is clear that a Bayes estimator
$\gb_{\epsilon}$ with prior $\lambda$, if it exists, will have the
property that $h_{\epsilon}(x;g,\lambda) \geq
h_{\epsilon}(x;\gb_{\epsilon},\lambda)$ for all $x$ and all estimators
$g$. One may assume that the class of estimators is the set of $L^1$
maps between $(\E,\d x)$ and $\Lambda$.

By compactness of $\Theta$, there is an $r_o > 0$ and $\epsilon_o>0$,
such that for all $x\in\E, \theta\in \Theta,$ and
$\epsilon<\epsilon_o$, the measure $\gauss_{\epsilon}(x-\iota(\theta))
\d \theta$ is supported, up to a remainder of $O(\exp(-1/\epsilon)$,
on the ball of radius $r_o$ about $\hat \theta=\pi(x)$. This
observation is trivial if $x$ lies within a distance $r$ of $\Theta$;
and it is trivial if $x$ lies in the complement of this neighbourhood,
since then the measure itself is $O(\exp(-1/\epsilon^2))$.

Let $B_{r_o}(\hat \theta)$ be the closed ball of radius $r_o$ centred
at $\hat \theta$. Possibly after shrinking $r_o$, the continuity of
$\gamma$ and compactness of $\Theta$ imply that the image,
$\gamma(B_{r_o}(\hat \theta))$, of $B_{r_o}(\hat \theta)$ may be
assumed to lie in a closed ball $D_{s_o}(\gamma(\hat \theta))$ of
fixed radius $s_o$ about $\gamma(\hat \theta)$. 

Introduce normal coordinates at $\hat \theta$ and $\gamma(\hat
\theta)$ so the above described balls are isometric to a ball about
$0$ in a real vector space with an almost euclidean riemannian metric
of the form $\sum_i \d x_i \otimes \d x_i + O(|x|^2)$.

We have therefore shown that $h_{\epsilon}(x;g,\lambda)$ may be
computed, up to a uniform remainder term of $O(\exp(-1/\epsilon))$,
using a map between two vector spaces that are equipped with metrics
that are euclidean up to second order. The techniques used in
\cite{Levit3,Levit4} can be used in this situation to show that the
Bayes estimator can be expanded as a formal power series in
$\epsilon^2$ and that this estimator is smooth.

\medskip
\noindent{\bf Remark.} If $(\Lambda, {\bf h})$ is a euclidean vector
space, the Bayes estimator exists and has the explicit form
\begin{equation} \label{eq:beucl}
\gb_{\epsilon}(x) = \frac{\int_{\theta\in \Theta} \gamma(\theta)\,
  \lambda(\theta)\, \gauss_{\epsilon}(x-\iota(\theta))\, \d
  \theta}{\int_{\theta\in \Theta} \lambda(\theta)\,
  \gauss_{\epsilon}(x-\iota(\theta))\, \d \theta}.
\end{equation}
This estimator has some rather curious properties: if $\gamma = \iota$
is the inclusion map $\Theta \subset \E$, then $\gb_{\epsilon}$ is the
weighted average of $\iota(\theta)$. Because this weighted average
need not lie on $\Theta$, one finds that the Bayes estimator is
somewhat unsatisfactory. The ultimate reason for this is the poor
choice of risk functional.
\medskip

\section{An Expansion of the Bayesian Risk} \label{sec:expansion}

First, introduce a change of variables.

\begin{lemma}
The map $J_{\epsilon} : \Theta \times \E \to \Theta \times \E$
defined by $\theta=\theta, x=\iota(\theta) + \epsilon z$ is a
diffeomorphism such that
\begin{equation} \label{eq:brisk2}
R_{\epsilon}(g;\lambda) = \int_{z \in \E, \theta \in \Theta}
\dist(g(\iota(\theta) + \epsilon z),\gamma(\theta))^2\, \lambda(\theta)\,
\gauss_{1}(z)\ \d z\, \d\theta.
\end{equation}
\end{lemma}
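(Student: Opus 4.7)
The plan is to verify the claim by exhibiting the inverse of $J_{\epsilon}$ explicitly and then performing a change of variables in the integral definition (\ref{eq:brisk}). The map $J_{\epsilon}(\theta, z) = (\theta,\, \iota(\theta) + \epsilon z)$ is smooth, and since $\epsilon>0$, its inverse is given by $J_{\epsilon}^{-1}(\theta, x) = \bigl(\theta,\, \epsilon^{-1}(x - \iota(\theta))\bigr)$, which is also smooth. So $J_{\epsilon}$ is a diffeomorphism of $\Theta \times \E$ to itself, with Jacobian determinant $\epsilon^s$ in the $\E$-factor (the $\Theta$-factor being the identity), i.e.\ $\d x = \epsilon^s \, \d z$ with $\d \theta$ unchanged.

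Next I would carry out the substitution in (\ref{eq:brisk}). Under $x = \iota(\theta) + \epsilon z$, the integrand $\dist(g(x), \gamma(\theta))^2 \lambda(\theta)$ becomes $\dist(g(\iota(\theta) + \epsilon z), \gamma(\theta))^2 \lambda(\theta)$, and the Gaussian factor transforms by
\[
\gauss_{\epsilon}(x - \iota(\theta)) = \gauss_{\epsilon}(\epsilon z) = \frac{\exp(-|z|^2/2)}{(2\pi \epsilon^2)^{s/2}} = \epsilon^{-s}\, \gauss_{1}(z).
\]
Combining this with $\d x = \epsilon^s\, \d z$, the factors of $\epsilon^{\pm s}$ cancel exactly, leaving the measure $\gauss_{1}(z)\,\d z\, \d\theta$ on $\E \times \Theta$ and yielding the identity (\ref{eq:brisk2}).

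There is no real obstacle here: the statement is a standard change-of-variables computation, and the only point requiring (trivial) care is the scaling identity for $\gauss_\epsilon$ under dilation. Fubini's theorem (justified by non-negativity of the integrand, or by the absolute integrability that was invoked implicitly in writing down (\ref{eq:brisk})) allows the re-ordering of integration so that the final expression is presented as an integral over $(z,\theta) \in \E \times \Theta$ against the product measure $\gauss_{1}(z)\,\d z\,\d\theta$.
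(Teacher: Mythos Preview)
Your proof is correct and is exactly the straightforward change-of-variables computation the paper has in mind; the paper itself merely writes ``A straightforward calculation'' for this lemma.
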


\begin{proof} A straightforward calculation. \end{proof}

The expression $\iota(\theta) + \epsilon z$ equals
$\exp_{\iota(\theta)} (\epsilon z)$ where $\exp$ is the exponential
map of the euclidean space $\E$. The Maclaurin series (equation
\ref{eq:ts}) implies that 
\begin{equation}
g \circ \exp_{\iota(\theta)} (\epsilon z) =
\exp_{g(\iota(\theta))}\left( \epsilon \d g(z) + \frac{1}{2}
\epsilon^2 \nabla \d g(z,z) + \frac{1}{6} \epsilon^3 \nabla^2 \d
g(z,z,z) + O(\epsilon^4 |z|^4) \right).
\end{equation}

\noindent
Since $\Lambda$ is connected, for each $a,b \in \Lambda$ there is a
geodesic $c : [0,1] \to \Lambda$ such that $c(0)=a$, $c(1)=b$ and the
length of $c$ is the distance between $a$ and $b$. That is, $|w|_a =
\dist(a,b)$ where $w=\dot{c}(0)$. The tangent vector $w=w_{a,b}$ is
not unique in general, but $w$ is a measurable function that is smooth
off the the cut locus of $a$. 

For $a=g(\iota(\theta))$ and $b=\gamma(\theta)$, let $w=w(\theta)$ be
the vector $w_{a,b}$. The vector $w(\theta)$ is characterized by the
property that $\exp_{g(\iota(\theta))}( w(\theta) ) = \gamma(\theta)$
for all $\theta$ and $w(\theta)$ is a shortest vector amongst all such
vectors. The Bayesian estimator $g_{\epsilon} : \E \to \Lambda$ is
written as
\begin{equation} \label{eq:geps}
g_{\epsilon}(x) = \exp_{g_o(x)}\left( \epsilon^2 g_2(x) +
O(\epsilon^4)\right).
\end{equation}
By definition, $g_{\epsilon}$ minimizes the Bayesian risk functional
$g \mapsto R_{\epsilon}(g;\lambda)$ for each $\epsilon$. Since the
Bayesian risk functional is an even function of $\epsilon$, the
Bayesian estimator is, too.

\begin{lemma} \label{lem:w}
Let the Bayesian risk $R_{\epsilon} = A_0 + O(\epsilon^2)$. Then
\begin{equation}
A_0(g;\lambda) = \int_{\theta \in \Theta} |w(\theta)|^2\
\lambda(\theta)\, \d\theta.
\end{equation}
Consequently, the Bayes estimator $g_{\epsilon}$ satisfies
\begin{equation} \label{eq:gepstheta}
g_{\epsilon} \circ \iota(\theta) = \exp_{\gamma(\theta)}(
\epsilon^2 g_2(\iota(\theta))) + O(\epsilon^4)) \qquad \forall \theta.
\end{equation}
 \end{lemma}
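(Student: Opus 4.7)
My plan is to compute the leading coefficient $A_0$ by passing to the limit $\epsilon\to 0$ under the integral in (\ref{eq:brisk2}), and then to use the Bayes-optimality of $g_\epsilon$ to force $g_0\circ\iota=\gamma$ on $\Theta$.

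First I would set
\[
F_\epsilon(\theta,z):=\dist(g(\iota(\theta)+\epsilon z),\gamma(\theta))^2\,\lambda(\theta)\,\gauss_1(z)
\]
and note pointwise convergence $F_\epsilon\to F_0$ as $\epsilon\to 0$. Compactness of $\Theta$, smoothness of $g$ (as guaranteed for the Bayes estimator by Section~\ref{sec:bayes}), and continuity of the distance function force $\dist(g(\iota(\theta)+\epsilon z),\gamma(\theta))^2$ to grow at most polynomially in $|z|$ uniformly in $(\theta,\epsilon)$ on bounded intervals of $\epsilon$; thus $F_\epsilon$ is dominated by an integrable multiple of $(1+|z|^N)\gauss_1(z)$. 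Dominated convergence combined with $\int_\E\gauss_1(z)\,\d z=1$ then yields
\[
A_0(g;\lambda)=\int_{\theta\in\Theta}\dist(g(\iota(\theta)),\gamma(\theta))^2\,\lambda(\theta)\,\d\theta=\int_{\theta\in\Theta}|w(\theta)|^2\,\lambda(\theta)\,\d\theta,
\]
the second equality being the defining property of $w(\theta)$ as the minimising-geodesic tangent vector from $g(\iota(\theta))$ to $\gamma(\theta)$.

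For (\ref{eq:gepstheta}), Section~\ref{sec:bayes} provides the smooth even expansion (\ref{eq:geps}). Writing $g_0$ for the zeroth-order coefficient and applying the optimality inequality $R_\epsilon(g_\epsilon;\lambda)\leq R_\epsilon(g;\lambda)$ to the asymptotic $R_\epsilon=A_0+O(\epsilon^2)$, the limit $\epsilon\to 0$ gives $A_0(g_0;\lambda)\leq A_0(g;\lambda)$ for every competing smooth $g$. The integrand of $A_0$ is pointwise nonnegative and vanishes exactly when $g(\iota(\theta))=\gamma(\theta)$, so (interpreting $\lambda$ as a strictly positive smooth prior on the compact connected $\Theta$, as is standard) the minimum is attained only when $g_0\circ\iota\equiv\gamma$ throughout $\Theta$. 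Substituting $g_0(\iota(\theta))=\gamma(\theta)$ into (\ref{eq:geps}) at $x=\iota(\theta)$, and using smoothness of $\exp_{\gamma(\theta)}$ at the origin to move an $O(\epsilon^4)$ additive perturbation from inside the exponential to outside, yields (\ref{eq:gepstheta}).

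The only real subtlety is one of scope: $A_0$ depends on $g$ only through its restriction to the submanifold $\iota(\Theta)\subset\E$, so the minimisation argument determines $g_0$ only on $\iota(\Theta)$, which is precisely the content of the second claim. Determining $g_0$ off $\iota(\Theta)$ (which will turn out to be $\gamma\circ\pi$, as foreshadowed in the introduction) requires the next order of the risk expansion carried out in subsequent sections, and is not needed here.
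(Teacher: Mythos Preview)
Your proof is correct and follows essentially the same approach as the paper's, which is considerably terser (it declares the formula for $A_0$ ``straightforward'' and then argues from $\lambda>0$ a.e.\ that $A_0=0$ forces $w=0$ a.e., whence $g_o\circ\iota=\gamma$ and (\ref{eq:geps}) gives (\ref{eq:gepstheta})). One minor simplification: since $\Lambda$ is assumed compact, $\dist$ is bounded by the diameter of $\Lambda$, so your polynomial-growth estimate is unnecessary for the dominated-convergence step.
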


\begin{proof} The formula for $A_0$ is straightforward. Since $\lambda
  > 0$ a.e. by hypothesis, and $A_0 \geq 0$, it follows that $A_0 = 0$
    only if $w=0$ a.e., that is, only if $g_{\epsilon=0} \circ \iota =
    \gamma$. Since $g_{\epsilon=0} = g_o$, equation \ref{eq:geps}
    implies equation \ref{eq:gepstheta}.
\end{proof}

Lemma \ref{lem:w} shows that 
\begin{corollary} \label{cor:go}
$g_o \circ \iota = \gamma$. 
\end{corollary}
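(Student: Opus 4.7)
The plan is to read this corollary off Lemma \ref{lem:w} and the definition of $g_o$ from equation \ref{eq:geps}, so the argument is essentially extraction rather than new computation.

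First I would recall that equation \ref{eq:geps} defines $g_o(x)$ as the zeroth-order term in the $\epsilon$-expansion of the Bayes estimator, i.e. $g_{\epsilon=0}(x) = g_o(x)$ for every $x \in \E$. Specializing to $x = \iota(\theta)$ and comparing with equation \ref{eq:gepstheta} of Lemma \ref{lem:w}, whose value at $\epsilon = 0$ is $g_{\epsilon=0} \circ \iota(\theta) = \exp_{\gamma(\theta)}(0) = \gamma(\theta)$, immediately yields $g_o \circ \iota(\theta) = \gamma(\theta)$ for every $\theta \in \Theta$.

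For readers who prefer to see why equation \ref{eq:gepstheta} forces this, I would spell out the minimization step: Lemma \ref{lem:w} identifies the leading coefficient of the Bayesian risk as $A_0(g;\lambda) = \int_\Theta |w(\theta)|^2\, \lambda(\theta)\, \d\theta \geq 0$. Since $\lambda > 0$ a.e.\ on $\Theta$, equality $A_0 = 0$ holds if and only if $w(\theta) = 0$ for a.e.\ $\theta$. The value $A_0 = 0$ is achievable (take any continuous $g$ satisfying $g \circ \iota = \gamma$, e.g.\ $\gamma \circ \pi$), so the Bayes estimator, being a minimizer of the full risk and \emph{a fortiori} of the leading term, must satisfy $w(\theta) = 0$ a.e. By the defining property of $w(\theta)$, namely $\exp_{g_o(\iota(\theta))}(w(\theta)) = \gamma(\theta)$ with $w$ of minimal length, the vanishing $w(\theta) = 0$ forces $g_o(\iota(\theta)) = \gamma(\theta)$ almost everywhere on $\Theta$. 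The smoothness of $g_o$ established in section \ref{sec:bayes} promotes this a.e.\ identity to an identity on all of $\Theta$.

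There is no substantive obstacle; the heavy lifting was done in Lemma \ref{lem:w}. The only mild subtlety worth flagging is that although $w(\theta)$ may be ambiguous away from the cut locus when $g_o \circ \iota \neq \gamma$, the statement $w(\theta) = 0$ is unambiguous as a statement about the zero tangent vector, so the possible non-uniqueness of minimizing geodesics in the definition of $w$ does not interfere with the conclusion.
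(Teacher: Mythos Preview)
Your proposal is correct and matches the paper's approach exactly: the paper simply states that Lemma~\ref{lem:w} shows the corollary, and your argument is precisely the extraction of $g_{\epsilon=0}\circ\iota=\gamma$ together with $g_{\epsilon=0}=g_o$ that is already contained in the proof of Lemma~\ref{lem:w}. Your additional remarks on achievability, smoothness, and the cut-locus ambiguity are sound but go slightly beyond what the paper spells out.
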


\begin{lemma} \label{lem:dgeps}
$\d g_{\epsilon} = \d g_o + \epsilon^2 \d g_2 + O(\epsilon^4).$
\end{lemma}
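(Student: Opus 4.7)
The plan is to differentiate equation~(\ref{eq:geps}) using the Jacobi-field formula for the variation of the exponential map. Write $V_\epsilon(x) := \epsilon^2 g_2(x) + O(\epsilon^4)$, a section of $g_o^*T\Lambda$ that vanishes to order $\epsilon^2$. For $v \in T_x\E$, fix a curve $c$ with $c(0)=x$ and $\dot c(0)=v$, and form the two-parameter variation $f(s,t) := \exp_{g_o(c(s))}\!\bigl(t V_\epsilon(c(s))\bigr)$. Since $g_\epsilon \circ c(s) = f(s,1)$, one has $\d g_\epsilon(v) = \partial_s f(0,1)$, while $J_v(t) := \partial_s f(0,t)$ is a Jacobi field along the geodesic $\gamma_\epsilon(t) := \exp_{g_o(x)}(t V_\epsilon(x))$ with initial data
\begin{equation*}
J_v(0) = \d g_o(v), \qquad \frac{\nabla J_v}{\d t}(0) = \nabla_v V_\epsilon,
\end{equation*}
the second identity following from the symmetry of the mixed covariant derivative, $\nabla_s \partial_t f = \nabla_t \partial_s f$.

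Next, I would exploit the Jacobi equation $\frac{\nabla^2 J_v}{\d t^2} = -{\rm R}^{\Lambda}(J_v, \dot\gamma_\epsilon) \dot\gamma_\epsilon$. Since $\dot\gamma_\epsilon = V_\epsilon = O(\epsilon^2)$, the right-hand side is $O(\epsilon^4)$ uniformly on compacta. Integrating twice on $[0,1]$ and using parallel transport $P_\epsilon$ along $\gamma_\epsilon$ to identify the fibres of $g_o^*T\Lambda$ and $g_\epsilon^*T\Lambda$ over $x$ yields
\begin{equation*}
P_\epsilon^{-1} J_v(1) \;=\; J_v(0) + \frac{\nabla J_v}{\d t}(0) + O(\epsilon^4) \;=\; \d g_o(v) + \epsilon^2\, \nabla_v g_2 + O(\epsilon^4).
\end{equation*}
Interpreting $\d g_2(v) := \nabla_v g_2$, as is consistent with the pullback-connection conventions of section~\ref{sec:mac}, this is exactly the asserted expansion.

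Finally, to see that the $O(\epsilon^4)$ remainder survives differentiation in $x$, I would invoke the formal-power-series expansion of $g_\epsilon$ in $\epsilon^2$ justified at the end of section~\ref{sec:bayes}: each coefficient is a smooth section of $g_o^*T\Lambda$, so differentiation in $x$ and in $\epsilon$ commute order-by-order, and the spatial derivative $\nabla_v(V_\epsilon - \epsilon^2 g_2)$ is again $O(\epsilon^4)$ uniformly on compact sets.

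The main bookkeeping obstacle is the identification of $T_{g_\epsilon(x)}\Lambda$ with $T_{g_o(x)}\Lambda$: the two are different fibres, and the statement of the lemma is implicitly formulated modulo this identification. Since $P_\epsilon$ is an isometry whose deviation from the flat chart identification along $\gamma_\epsilon$ is captured by curvature corrections of order $|V_\epsilon|^2 = O(\epsilon^4)$, this identification does not affect the expansion through order $\epsilon^2$, and the lemma follows.
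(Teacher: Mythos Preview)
Your argument is correct and is essentially the same as the paper's: both differentiate equation~(\ref{eq:geps}) by interpreting $\d g_\epsilon(v)$ as a Jacobi field along the short geodesic from $g_o(x)$ to $g_\epsilon(x)$ and then Taylor-expand that Jacobi field. The paper parametrizes the geodesic on $[0,\epsilon^2]$ with unit-scale initial velocity and reads off $J(\epsilon^2)=J(0)+\epsilon^2\dot J(0)+O(\epsilon^4)$, whereas you parametrize on $[0,1]$ with velocity $V_\epsilon=O(\epsilon^2)$ and bound the curvature term in the Jacobi equation by $O(\epsilon^4)$ before integrating; these are the same computation up to reparametrization. Your discussion of the parallel-transport identification $P_\epsilon$ and of the uniformity of the remainder is more explicit than the paper's, but adds no new ingredient.
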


\begin{proof} Let $x \in \E$ and $v \in T_x \E$. It suffices to
  prove
$$\d_x g_{\epsilon} \cdot v = ( \d_x g_o + \epsilon^2 \d_x g_2 +
  O(\epsilon^4)) \cdot v. \eqno(*)$$
The left-hand side of (*) is
$$\d_x g_{\epsilon} \cdot v = \left. \frac{\d\ }{\d t} \right|_{t=0}\
\exp_{a(t)}\left( \epsilon^2 b(t) + O(\epsilon^4) \right), \eqno
(**)$$ where $a(t) = g_o(x+tv)$ is a curve in $\Lambda$ and $b(t) =
g_2(x+tv)$ is a curve of tangent vectors along $a(t)$. The right-hand
side of (**) is the Jacobi field $J(s)$ on $(\Lambda, {\bf h})$ with
initial conditions $J(0)=\dot{a}(0)$ and
$\dot{J}(0)=\dot{b}(0)+O(\epsilon^2)$ at the time
$s=\epsilon^2$. Since $J(s) = J(0) + s\dot{J}(0) + O(s^2)$ and
$\dot{a}(0) = \d_x g_o \cdot v$, $\dot{b}(0) = \d_x g_2 \cdot v$ we
see that (**) implies (*). \end{proof}

\begin{lemma} \label{lem:bexpansion}
Let the Bayesian risk of the Bayesian estimator $g_{\epsilon}$ be
$R_{\epsilon} =\epsilon^2 A_2 + \epsilon^4 A_4 + O(\epsilon^6)$. If
$A_k = \int a_k \lambda(\theta) \d \theta$, then the integrand $a_k$ is
\begin{eqnarray}
a_2 &=& |\d g_o|^2\\
a_4 &=& \left\{ \begin{array}{l} \frac{1}{4}
|\tau(g_o)|^2 + \frac{1}{2}| \nabla \d g_o |^2 + \langle \d g_o,
\nabla \tau(g_o) - \frac{2}{3} \Ric{\d g_o} \rangle +\\ |g_2|^2 + 2
\langle \d g_2, \d g_o \rangle + \langle g_2, \tau(g_o) \rangle
\end{array} \right. \label{eq:a4}
\end{eqnarray}
\end{lemma}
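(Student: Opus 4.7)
The plan is to substitute the asymptotic form of $g_\epsilon$ (Corollary \ref{cor:go}, Lemma \ref{lem:w}, Lemma \ref{lem:dgeps}) into the change-of-variables formula \eqref{eq:brisk2}, expand the squared distance to order $\epsilon^4$, and then integrate over the Gaussian in $z$ using Lemmas \ref{lem:expvalue} and \ref{lem:ric}.

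First, I would apply the Maclaurin expansion \eqref{eq:ts} to $g_\epsilon \circ \exp_{\iota(\theta)}(\epsilon z)$ to write $g_\epsilon(\iota(\theta)+\epsilon z) = \exp_{g_\epsilon\circ\iota(\theta)}(Y)$ with
$$Y = \epsilon\,\d g_\epsilon(z) + \tfrac{\epsilon^2}{2}\nabla\d g_\epsilon(z,z) + \tfrac{\epsilon^3}{6}\nabla^2\d g_\epsilon(z,z,z) + O(\epsilon^4|z|^4).$$
Using Lemma \ref{lem:dgeps} and the analogous statements $\nabla^k\d g_\epsilon = \nabla^k\d g_o + O(\epsilon^2)$ for $k\ge 1$, this becomes $Y = \epsilon Y_1 + \epsilon^2 Y_2 + \epsilon^3 Y_3 + O(\epsilon^4)$ with $Y_1 = \d g_o(z)$, $Y_2 = \tfrac12\nabla\d g_o(z,z)$, and $Y_3 = \tfrac16\nabla^2\d g_o(z,z,z) + \d g_2(z)$. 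By equation \eqref{eq:gepstheta}, the basepoint satisfies $g_\epsilon\circ\iota(\theta) = \exp_{\gamma(\theta)}(u)$ with $u = \epsilon^2 g_2(\iota(\theta)) + O(\epsilon^4)$.

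Next, I would compute $\dist(\exp_{p_\epsilon}(Y), \gamma(\theta))^2$ in normal coordinates centred at $\gamma(\theta)$. Since the Christoffel symbols vanish at the origin and are $O(|x|)$ nearby, the geodesic $s\mapsto\exp_{p_\epsilon}(sY)$ differs from the straight line $u+sY$ by an amount of size $O(|u|\,|Y|^2) = O(\epsilon^4)$ in coordinates; since radial distance in normal coordinates is the Euclidean norm, it follows that
$$\dist^2 = |u|^2 + 2\langle u,Y\rangle + |Y|^2 + O(\epsilon^5).$$
Expanding through order $\epsilon^4$, the odd-in-$z$ contributions (namely $\langle u, Y_1\rangle$ at order $\epsilon^3$ and $\langle Y_1, Y_2\rangle$ at order $\epsilon^3$) integrate to zero against $\gauss_1(z)$. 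The $\epsilon^2$-coefficient is $|\d g_o(z)|^2$, and the surviving $\epsilon^4$-coefficient equals
$$\tfrac13\langle \d g_o(z),\nabla^2\d g_o(z,z,z)\rangle + \tfrac14|\nabla\d g_o(z,z)|^2 + 2\langle \d g_o(z),\d g_2(z)\rangle + \langle g_2,\nabla\d g_o(z,z)\rangle + |g_2|^2.$$

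Finally, I would integrate each term against $\gauss_1(z)\,\d z$ using Lemma \ref{lem:expvalue}: parts (1) and (2) convert $|\d g_o(z)|^2$ to $|\d g_o|^2$ and $|\nabla\d g_o(z,z)|^2$ to $|\tau(g_o)|^2 + 2|\nabla\d g_o|^2$; part (3) combined with Lemma \ref{lem:ric} converts the cubic-in-$z$ term to $\tfrac13\langle \d g_o, 3\nabla\tau(g_o) - 2\Ric{\d g_o}\rangle = \langle \d g_o, \nabla\tau(g_o) - \tfrac23\Ric{\d g_o}\rangle$. The two remaining quadratic-in-$z$ expressions give $\langle\d g_o,\d g_2\rangle$ and $\langle g_2,\tau(g_o)\rangle$ directly. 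Collecting yields the stated $a_2$ and $a_4$. The principal obstacle is the bookkeeping in Step 2: one must verify that the various implicit identifications (parallel transport along the short geodesic between $g_o(x)$ and $g_\epsilon(x)$ hidden in Lemma \ref{lem:dgeps}, together with curvature corrections in normal coordinates at $\gamma(\theta)$) all contribute at order $\epsilon^5$ or higher, so that the formula for $a_4$ is not perturbed; the appearance of the Ricci curvature of $\Lambda$ is not through these corrections but rather through the third-derivative term, via the integration-by-parts identity of Lemma \ref{lem:ric}.
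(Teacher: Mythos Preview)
Your argument is correct and mirrors the paper's proof: Maclaurin-expand $g_\epsilon$ about $\iota(\theta)$, read $\dist^2$ as the squared norm of the resulting tangent vector at $\gamma(\theta)$, and evaluate the Gaussian moments via Lemmas~\ref{lem:expvalue} and~\ref{lem:ric}. The only difference is cosmetic---the paper collapses your two-step basepoint shift (first to $g_\epsilon\circ\iota(\theta)$, then to $\gamma(\theta)$ via normal coordinates) into a single expansion based directly at $\gamma(\theta)$, writing your $u+Y$ as one vector $\epsilon^2 g_2 + \epsilon\,\d g_\epsilon(z)+\cdots$; your closing caveat about verifying that the normal-coordinate/parallel-transport corrections enter only at order $\epsilon^5$ applies equally to the paper's compressed version.
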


\begin{proof} 
When one expands $g_{\epsilon}(\iota(\theta)+\epsilon z)$ as a
Maclaurin series, one obtains
\begin{equation}
\exp_{\gamma(\theta)}\left(\epsilon^2 g_2+ \epsilon \d g_{\epsilon}(z)
+ \frac{1}{2} \epsilon^2 \nabla \d g_{\epsilon}(z,z) + \frac{1}{6}
\epsilon^3\ \nabla^2 \d g_{\epsilon}(z,z,z)
+ O(\epsilon^4 |z|^4) \right).
\end{equation}
Since $\d g_{\epsilon} = \d g_o + \epsilon^2 \d g_2 + O(\epsilon^4)$
by Lemma \ref{lem:dgeps}, the Maclaurin series equals
\begin{equation}
\exp_{\gamma(\theta)}\left( \epsilon \d g_o(z) + 
\epsilon^2 \left( g_2 + \frac{1}{2}\nabla \d
g_o(z,z) \right) + \epsilon^3 \left( \d
g_2(z) + \frac{1}{6}
\nabla^2 \d g_o(z,z,z) \right) + O(\epsilon^4 |z|^4) \right).
\end{equation}
The distance between $g_{\epsilon}(\iota(\theta)+\epsilon z)$ and
$\gamma(\theta)$ expands to
\begin{eqnarray}
\dist &=& \epsilon^2|\d g_o(z)|^2 + \epsilon^4\left( |g_2 + \frac{1}{2}
\nabla \d g_o(z,z)|^2 + 2\langle \d g_o(z), \d g_2(z) + \frac{1}{6}\nabla^2
\d g_o(z,z,z) \rangle \right) \nonumber\\ 
&&+ \epsilon^3 (\cdot) + \epsilon^5 (\cdot) +
O(\epsilon^6|z|^6),
\end{eqnarray}
where the coefficients on the odd powers of $\epsilon$ are odd
polynomials in $z$. Lemmas~\ref{lem:ric}--\ref{lem:ric2} now implies
this lemma.
\end{proof}

\noindent
Recall that $\pi : N\Theta \to \Theta$ is the normal bundle of
$\Theta$ in $\E$; the tangent bundle of $N\Theta$ is isometric to
$T_{\Theta}\E$ while $\d\pi$ is the orthogonal projection of
$T(N\Theta)$ onto $T\Theta$. Corollary \ref{cor:go} implies that $\d
g_o | T\Theta = \d\gamma$, so on $\Theta$ $|\d g_o|^2 \geq
|\d\gamma|^2$ with equality iff $\d g_o | N\Theta = 0$ or $\d g_o |
T_{\Theta} \E = \d \gamma \circ \d\pi$. By Lemma
\ref{lem:bexpansion}, these considerations show that

\begin{proposition} \label{prop:go}
The Bayesian estimator satisfies 
\begin{enumerate}
\item $g_o \circ \iota = \gamma$;
\item $\d g_o = \d(\gamma \circ \pi)$ on $T_{\Theta}\E$.
\end{enumerate}
\end{proposition}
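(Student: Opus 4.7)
Part (1) is immediate from Corollary \ref{cor:go}, so the plan focuses on part (2). The strategy is to exploit the variational characterisation of the Bayes estimator: by Lemma \ref{lem:bexpansion} the leading nontrivial coefficient of $R_\epsilon$ is
\[
A_2(g_o;\lambda) = \int_\Theta |\d g_o|^2_{\iota(\theta)}\, \lambda(\theta)\, \d\theta,
\]
where $|\cdot|$ denotes the Hilbert-Schmidt norm on $\Hom{T_{\iota(\theta)}\E ; T_{\gamma(\theta)}\Lambda}$ introduced in section \ref{ssec:im}. Since $g_\epsilon$ minimises $R_\epsilon$ and since $g_o\circ \iota=\gamma$ kills the lower-order term $A_0$, the minimising $g_o$ must in turn pointwise minimise $|\d g_o|^2_{\iota(\theta)}$ subject to the constraint coming from part (1).

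The first step is to differentiate the constraint $g_o\circ\iota = \gamma$ to obtain $\d_{\iota(\theta)} g_o \circ \d_\theta \iota = \d_\theta \gamma$; equivalently, the restriction of $\d g_o$ to the subspace $\d\iota(T_\theta \Theta)\subset T_{\iota(\theta)}\E$ is prescribed to equal $\d\gamma$. Next, using the orthogonal splitting
\[
T_{\iota(\theta)}\E \;=\; \d\iota(T_\theta \Theta)\,\oplus\, N_\theta\Theta,
\]
one picks an adapted orthonormal basis and the Hilbert-Schmidt norm decomposes additively:
\[
|\d_{\iota(\theta)} g_o|^2 \;=\; |\d_\theta\gamma|^2 \;+\; \bigl|\d_{\iota(\theta)} g_o\,|\,N_\theta\Theta\bigr|^2 \;\geq\; |\d_\theta\gamma|^2,
\]
with equality iff $\d g_o$ annihilates $N_\theta\Theta$. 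Because $\lambda>0$ everywhere, pointwise minimisation is forced, so the Bayes $g_o$ must satisfy $\d_{\iota(\theta)} g_o\,|\,N_\theta\Theta = 0$ for every $\theta\in\Theta$.

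Finally I would compare with $\gamma\circ\pi$: since $\pi:N\Theta\to\Theta$ is the tubular-neighbourhood projection, $\d\pi$ at a point of $\Theta$ is the orthogonal projection $T_\Theta\E \to T\Theta$, which is the identity on $\d\iota(T\Theta)$ and zero on $N\Theta$. Consequently $\d(\gamma\circ\pi) = \d\gamma\circ\d\pi$ agrees with $\d\gamma$ on tangential vectors and vanishes on normal vectors. Both conditions now match those just derived for $\d g_o$, so the two linear maps coincide on each summand and hence on all of $T_\Theta\E$. The main obstacle is the legitimacy of pointwise minimisation: one must check that a smooth extension realising $\d g_o\,|\,N\Theta = 0$ actually exists and is admissible, but this is witnessed explicitly by $g_o=\gamma\circ\pi$ itself (restricted to a tubular neighbourhood of $\Theta$ and smoothly extended), confirming that the infimum of $A_2$ is attained and forcing the stated equality.
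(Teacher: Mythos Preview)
Your proposal is correct and follows essentially the same approach as the paper: both invoke Corollary~\ref{cor:go} for part (1), then use the orthogonal splitting $T_{\Theta}\E = T\Theta \oplus N\Theta$ together with Lemma~\ref{lem:bexpansion}'s formula $a_2 = |\d g_o|^2$ to deduce that $|\d g_o|^2 \geq |\d\gamma|^2$ with equality iff $\d g_o\,|\,N\Theta = 0$, and identify this with $\d(\gamma\circ\pi)$. Your version is somewhat more explicit about the pointwise-minimisation step and the existence of a witness realising the infimum, but the argument is the same.
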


\noindent
Define $\Gamma = \gamma \circ \pi$. The next step is to show that
$\nabla \d g_o = \nabla \d \Gamma$ on $\Theta$. To do so requires that
$a_4$ (Lemma \ref{lem:bexpansion}) be simplified.

\begin{lemma} \label{lem:intbp}
Under the standing hypothesis that $\lambda > 0$ on $\Theta$, we have

\begin{enumerate}
\item $\int_{\Theta} \d\theta\, \lambda(\theta)\, \langle \d g_o, \d
  g_2 \rangle = \int_{\Theta} \d\theta\, \lambda(\theta)\, \langle
  g_2, \tau(\gamma) + \d \gamma(\nabla \log \lambda) \rangle$;
\item $g_2 \circ \iota = \tau(\gamma) - \frac{1}{2}\tau(g_o) + \d
  \gamma( \nabla \log \lambda).$
\item $a_4 = \frac{1}{2}| \nabla
  \d g_o |^2 -  |\tau(\gamma)+\d\gamma(\nabla \log \lambda)|^2 - \frac{2}{3} \langle \d\Gamma, \Ric{\d\Gamma} \rangle;$
\end{enumerate}
\end{lemma}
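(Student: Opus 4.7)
The plan is to prove the three claims in order: (1) is a single integration-by-parts identity on the boundaryless manifold $\Theta$; (2) follows by minimizing the Bayesian risk at order $\epsilon^4$ using (1); and (3) is obtained by substituting the formula of (2) back into the expression (\ref{eq:a4}) for $a_4$, with one additional application of (1) and one appeal to Lemma \ref{lem:ric2}.

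For (1), the key reduction is geometric. By Corollary \ref{cor:go} and Proposition \ref{prop:go}, $\d g_o\mid T\Theta = \d\gamma$ while $\d g_o\mid N\Theta = 0$. Choosing an adapted orthonormal frame, the Hilbert--Schmidt inner product $\langle \d g_o, \d g_2\rangle$ taken over $T_\theta\E$ therefore collapses to $\langle \d\gamma, \nabla(g_2\circ\iota)\rangle$ taken over $T_\theta\Theta$. Now $g_2\circ\iota$ is a smooth section of $\gamma^*T\Lambda$ over the closed manifold $\Theta$, so Lemma \ref{lem:ibp} (with vanishing boundary term) and Lemma \ref{lem:ibp2} apply and give $\d^*(\lambda\d\gamma) = -\lambda\tau(\gamma) - \d\gamma(\nabla\lambda)$; dividing out the $\lambda$ inside the inner product via $\nabla\log\lambda = \nabla\lambda/\lambda$ yields (1).

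For (2), inspection of (\ref{eq:a4}) shows that the only $g_2$-dependent terms of $A_4 = \int_\Theta \lambda a_4\, \d\theta$ are
\[
F(g_2) = \int_\Theta \lambda\bigl(|g_2|^2 + 2\langle \d g_o, \d g_2\rangle + \langle g_2, \tau(g_o)\rangle\bigr)\, \d\theta.
\]
Applying (1) eliminates $\d g_2$ and turns $F$ into a strictly convex quadratic functional in $g_2\circ\iota$ alone; completing the square (or setting the first variation to zero) yields $g_2\circ\iota = \tau(\gamma) + \d\gamma(\nabla\log\lambda) - \tfrac12\tau(g_o)$, which is (2).

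For (3), write $\mu := \tau(\gamma) + \d\gamma(\nabla\log\lambda)$, so (2) reads $g_2\circ\iota = \mu - \tfrac12\tau(g_o)$. Substituting into $F(g_2)$ and integrating against $\lambda$, the block $|g_2|^2 + 2\langle \d g_o, \d g_2\rangle + \langle g_2,\tau(g_o)\rangle$ collapses to $-|\mu|^2 + \langle \mu,\tau(g_o)\rangle - \tfrac14|\tau(g_o)|^2$, cancelling the $\tfrac14|\tau(g_o)|^2$ already present in (\ref{eq:a4}). The remaining term $\int_\Theta \lambda\langle \d g_o, \nabla\tau(g_o)\rangle\, \d\theta$ is handled by a second application of (1) with $g_2$ replaced by $\tau(g_o)$, producing $-\int_\Theta \lambda\langle \tau(g_o),\mu\rangle\, \d\theta$ which exactly kills the cross-term $\langle \mu,\tau(g_o)\rangle$. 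What remains in the integrand is $\tfrac12|\nabla\d g_o|^2 - |\mu|^2 - \tfrac23\langle \d g_o, \Ric{\d g_o}\rangle$, and Lemma \ref{lem:ric2} together with Proposition \ref{prop:go}(2) (which gives $\d g_o = \d\Gamma$ on $T_\Theta\E$) upgrades the Ricci term to $\langle \d\Gamma, \Ric{\d\Gamma}\rangle$, establishing (3). The main obstacle is the careful bookkeeping of bundle-valued forms --- $g_2$ is naturally a section of $g_o^*T\Lambda$ over $\E$ but becomes a section of $\gamma^*T\Lambda$ over $\Theta$ after restriction, and the $T_\theta\E$-inner product $\langle \d g_o, \d g_2\rangle$ must be reduced to a $T_\theta\Theta$-inner product before IBP on $\Theta$ applies. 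Once $\d g_o\mid N\Theta = 0$ permits this reduction, the remaining manipulations are routine algebra with Lemmas \ref{lem:ric2}, \ref{lem:ibp}, and \ref{lem:ibp2}.
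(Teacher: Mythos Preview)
Your proposal is correct and follows essentially the same route as the paper: reduce $\langle \d g_o,\d g_2\rangle$ to $\langle \d\gamma,\d(g_2\circ\iota)\rangle$ via Proposition~\ref{prop:go}, integrate by parts using Lemmas~\ref{lem:ibp}--\ref{lem:ibp2} to get (1), complete the square for (2), and then substitute back, apply the same integration-by-parts to $\lambda\langle \d g_o,\nabla\tau(g_o)\rangle$, and invoke Lemma~\ref{lem:ric2} for (3). Your write-up is in fact slightly more explicit than the paper's about the bundle bookkeeping (why the $T_\theta\E$-inner product collapses to a $T_\theta\Theta$-inner product before IBP on $\Theta$ is applicable), which is a welcome clarification rather than a different argument.
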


\begin{proof}
The inner product $\lambda\, \langle \d g_o, \d g_2 \rangle$ on
$\Theta$ equals $\langle \lambda \cdot \d(\gamma \circ \pi), (\d g_2)
\circ \iota \rangle$ which equals $\langle \lambda \cdot \d\gamma, \d
(g_2 \iota) \rangle$. The integration-by-parts formula (Lemma
\ref{lem:ibp2}) for sections of $T^*\Theta \otimes \gamma^*T\Lambda$
yields (1).

(1) along with equation \ref{eq:a4} yields
$$a_4 = \left\{ \begin{array}{l} |g_2 \circ \iota - \tau(\gamma)
  - \frac{1}{2}\tau(g_o) + \d \gamma( \nabla \log \lambda)|^2 +\\ \langle
  \tau(g_o), \tau(\gamma) + \d\gamma(\nabla \log \lambda) \rangle -
  |\tau(\gamma)+\d\gamma(\nabla \log \lambda)|^2 +\\ \frac{1}{2}| \nabla
  \d g_o |^2 + \langle \d g_o, \nabla \tau(g_o) - \frac{2}{3} \Ric{\d
    g_o} \rangle;
\end{array} \right.$$
It is clear that $a_4$ is minimized by setting $g_2$ to that in (2). 

Finally, Lemma \ref{lem:ric2} implies that $\langle \d g_o, \Ric{\d
g_o} \rangle = \langle \d\Gamma, \Ric{\d\Gamma} \rangle$ on
$\Theta$. A second application of the integration-by-parts formula to
$\lambda \, \langle \d g_o, \nabla \tau(g_o) \rangle$ proves (3).
\end{proof}

\begin{proposition} \label{prop:nabladgo}
The Bayesian estimator satisfies
\begin{enumerate}
\item $\nabla \d g_o = \nabla \d\Gamma$ on $\Theta$;
\item $\tau(g_o) = \tau(\gamma)$ on $\Theta$;
\item $g_2 \circ \iota = \frac{1}{2}\tau(\gamma) + \d\gamma(\nabla \log \lambda)$; and
\item $a_4 = \frac{1}{2}| \nabla \d\Gamma |^2 -
  |\tau(\gamma)+\d\gamma(\nabla \log \lambda)|^2 - \frac{2}{3} \langle
  \d\Gamma, \Ric{\d\Gamma} \rangle.$
\end{enumerate}
\end{proposition}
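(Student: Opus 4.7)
The plan is to exploit the form of $a_4$ given by Lemma \ref{lem:intbp}(3): once Proposition \ref{prop:go} has fixed $g_o\circ\iota=\gamma$ and $\d g_o|_{T_\Theta\E}=\d\Gamma$, the only term in $a_4$ still depending on $g_o$ is $\tfrac{1}{2}|\nabla\d g_o|^2$. The remaining freedom in $g_o$ is its behaviour normal to $\Theta$, so I would split $T_\Theta\E = T\Theta\oplus N\Theta$, decompose $\nabla\d g_o|_\Theta$ into its $T\Theta\otimes T\Theta$, $T\Theta\otimes N\Theta$, and $N\Theta\otimes N\Theta$ blocks, and check that the first two blocks are already forced to coincide with the corresponding blocks of $\nabla\d\Gamma$, while the third is free and is minimized by the same choice, namely zero.

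For the tangent--tangent block, differentiating $g_o\circ\iota=\gamma$ twice via the Hessian chain rule, together with the facts that $\d g_o$ annihilates $N\Theta$ and the second fundamental form of $\iota$ is $N\Theta$-valued, gives $\nabla\d g_o(X,Y)=\nabla\d\gamma(X,Y)$ for $X,Y\in T\Theta$; applying the same chain rule to $\Gamma=\gamma\circ\pi$ (together with $\d\pi|_{N\Theta}=0$ and the vanishing of $\nabla\d\pi$ on $T\Theta\otimes T\Theta$ established in the proof of Lemma \ref{lem:pi}) gives the same value. For the mixed block, symmetry of the Hessian lets me compute $\nabla\d g_o(X,\eta) = \nabla_X\d g_o\cdot\eta$; since $\d g_o\cdot\eta\equiv 0$ along $\Theta$ for any normal extension of $\eta$, the Leibniz rule yields $\nabla_X\d g_o\cdot\eta = -\d g_o(\nabla_X\eta)$, and the Weingarten decomposition of $\nabla_X\eta$ into its tangential and normal parts kills the normal part under $\d g_o$, leaving a $T\Theta$-valued expression depending only on the shape operator of $\iota$ and on $\d\gamma$. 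Running the same calculation on $\pi$ in place of $g_o$ gives the matching expression for $\nabla\d\Gamma(X,\eta)$.

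This leaves the normal--normal block $\nabla\d g_o(\eta_1,\eta_2)$, which at a point $\theta\in\Theta$ genuinely depends on the second derivatives of $g_o$ in the normal directions off $\Theta$ and is therefore unconstrained by the 1-jet along $\Theta$. Pointwise, $|\nabla\d g_o|^2$ is a nonnegative quadratic in this block, so the Bayesian optimum sets it to zero; since $\pi$ is constant along each normal fibre, $\nabla\d\Gamma(\eta_1,\eta_2)=0$ as well, and the three blocks together give (1). Taking the trace in an orthonormal frame adapted to $T\Theta\oplus N\Theta$ yields (2); substituting $\tau(g_o)=\tau(\gamma)$ into Lemma \ref{lem:intbp}(2) yields (3); and substituting $|\nabla\d g_o|^2=|\nabla\d\Gamma|^2$ into Lemma \ref{lem:intbp}(3) yields (4).

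The main obstacle is the mixed block: at first glance it appears free, since one is differentiating in a normal direction, but the symmetry of the Hessian combined with the constraint $\d g_o|_{N\Theta}=0$ rigidly determines it. Getting the Weingarten bookkeeping to line up with the analogous computation for $\pi$, so that the $T\Theta\otimes N\Theta$ blocks of $\nabla\d g_o$ and $\nabla\d\Gamma$ agree automatically rather than via a separate optimization, is the delicate point; once it is pinned down, the remainder is forced by Proposition \ref{prop:go} or by pointwise minimization of a nonnegative quadratic.
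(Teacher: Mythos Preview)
Your proposal is correct and follows essentially the same route as the paper: reduce via Lemma~\ref{lem:intbp}(3) to minimizing $|\nabla\d g_o|^2$, split into the three blocks along $T_\Theta\E=T\Theta\oplus N\Theta$, show the tangent--tangent and mixed blocks are forced by the 1-jet data of Proposition~\ref{prop:go} while the normal--normal block is free and minimized at zero, then read off (2)--(4). The only cosmetic difference is that you handle the mixed block by a Leibniz/Weingarten computation, whereas the paper dispatches it with a one-line ``Maclaurin series argument''; both express the same fact that $\nabla\d g_o(\alpha,\beta)$ is determined by the already-fixed first-order jet along $\Theta$.
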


\begin{proof} By (3) of Lemma \ref{lem:intbp}, it is clear that $a_4$
  is minimized iff $|\nabla \d g_o |^2$ is minimized. Let $\alpha$
(resp. $\beta$) be the orthogonal projection of $T_{\Theta} \E$ onto
$T\Theta$ (resp. $N\Theta$). This orthogonal decomposition yields the
equality
$$|\nabla \d g_o |^2 = |\nabla \d g_o(\alpha,\alpha) |^2 + 2 |\nabla
\d g_o(\alpha,\beta) |^2 + |\nabla \d g_o(\beta,\beta) |^2.$$ Since
$|\nabla \d g_o(\alpha,\alpha) |^2 = |\nabla \d
g_o(\d\iota,\d\iota)|^2$ and $\nabla \d(g_o\iota) = \nabla \d
g_o(\d\iota,\d\iota) + \d g_o \cdot \nabla \d\iota$, Proposition
\ref{prop:go} yields $|\nabla \d g_o(\alpha,\alpha) |^2 =
|\nabla\d\gamma|^2 = |\nabla\d(\gamma\pi)(\alpha,\alpha)|^2$. Part (4)
follows from this.

A Maclaurin series argument shows that Proposition \ref{prop:go}
implies that $\nabla \d g_o(\alpha,\beta) = \nabla \d(\gamma
\pi)(\alpha,\beta)$, while $|\nabla\d g_o(\beta,\beta)|$ is
unconstrained. This is minimized by $0 = |\nabla \d(\gamma
\pi)(\beta,\beta)|$. This proves (1).

The formula $\tau(g_o) = \tau(\gamma) + \d\gamma \cdot \tau(\pi)$ is
implied by (1). Since $\pi$ is harmonic by Lemma~\ref{lem:pi}, this
implies (2). Lemma \ref{lem:intbp} part (2) implies (3).
\end{proof}

\bigskip\noindent
Let us summarize the results of this section.

\begin{theorem} \label{thm:best}
Let $\gb_{\epsilon}(x) = \exp_{g_o(x)}\left( \epsilon^2 g_2(x) +
O(\epsilon^4)\right)$ be the Bayesian estimator for the Bayesian risk
functional $R_{\epsilon}$ (Equation \ref{eq:brisk}) with a fixed
Bayesian prior $\lambda>0$. Then
\begin{enumerate}
\item for all $x \in N \Theta$, where $\hat{\theta} = \pi(x)$ and
$|x-\hat{\theta}| \leq r$, $$\gb_{\epsilon}(x) =
\exp_{\gamma(\hat{\theta})}\left( \epsilon^2 \left( \frac{1}{2}
\tau(\gamma) + \d\gamma(\nabla \log\lambda) \right)_{\hat{\theta}} +
O(r\epsilon^4) \right).$$\\

\item
\begin{align*}
R_{\epsilon}(\gb_{\epsilon} ; \lambda) &= \epsilon^2\, \int \d\theta\,
\lambda\, |\d\gamma|^2 +\\ &\ \ \ \ \epsilon^4 \int \d\theta\,
\lambda\, \left\{ \frac{1}{2}| \nabla \d\Gamma |^2 -
|\tau(\gamma)+\d\gamma(\nabla \log \lambda)|^2 - \frac{2}{3} \langle
\d\Gamma, \Ric{\d\Gamma} \rangle \right\} + O(\epsilon^6),
\end{align*}
where $\Gamma =\gamma\pi$.
\end{enumerate}
\end{theorem}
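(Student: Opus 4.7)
My plan is to assemble the results already established in Propositions \ref{prop:go} and \ref{prop:nabladgo} together with Lemma \ref{lem:bexpansion}. Part (1) follows from extending the formulas $g_o\circ\iota=\gamma$ and $g_2\circ\iota=\frac{1}{2}\tau(\gamma)+\d\gamma(\nabla\log\lambda)$ off $\iota(\Theta)$ by Taylor expansion in the normal direction. Part (2) follows from substituting the simplified expressions for $a_2$ and $a_4$ into the formula $R_\epsilon=\epsilon^2 A_2+\epsilon^4 A_4+O(\epsilon^6)$ of Lemma \ref{lem:bexpansion}.

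For part (1), I fix $x\in N\Theta$ near $\iota(\hat\theta)$, write $\nu:=x-\iota(\hat\theta)\in N_{\hat\theta}\Theta$ with $|\nu|\le r$, and start from the definition $\gb_\epsilon(x)=\exp_{g_o(x)}\bigl(\epsilon^2 g_2(x)+O(\epsilon^4)\bigr)$. Taylor-expanding $g_o$ at $\iota(\hat\theta)$, Proposition \ref{prop:go}(2) yields $\d g_o=\d\gamma\circ\d\pi$ on $T_\Theta\E$; since $\d\pi$ annihilates $N_{\hat\theta}\Theta$, the first-order term $\d g_o\cdot\nu$ vanishes, so $g_o(x)=\exp_{\gamma(\hat\theta)}(O(r^2))$. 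Smoothness of $g_2$ combined with Proposition \ref{prop:nabladgo}(3) gives $g_2(x)=\bigl(\frac{1}{2}\tau(\gamma)+\d\gamma(\nabla\log\lambda)\bigr)_{\hat\theta}+O(r)$. A Jacobi-field argument on $\Lambda$ of the type used to prove Lemma \ref{lem:dgeps} then permits rebasing both exponentials at the single point $\gamma(\hat\theta)$, collecting all $r$- and $\epsilon$-dependent corrections into the claimed error term.

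For part (2), Lemma \ref{lem:bexpansion} gives $R_\epsilon=\epsilon^2\int\lambda|\d g_o|^2\,\d\theta+\epsilon^4\int\lambda\, a_4\,\d\theta+O(\epsilon^6)$, so it suffices to identify the two integrands. Proposition \ref{prop:go}(2), together with the orthogonal decomposition $T_{\iota(\theta)}\E=T_\theta\Theta\oplus N_\theta\Theta$, shows that $\d g_o$ restricts to $\d\gamma$ on $T_\theta\Theta$ and to $0$ on $N_\theta\Theta$; in the Hilbert--Schmidt norm this gives $|\d g_o|^2=|\d\gamma|^2$. Proposition \ref{prop:nabladgo}(4) supplies the simplified expression $a_4=\frac{1}{2}|\nabla\d\Gamma|^2-|\tau(\gamma)+\d\gamma(\nabla\log\lambda)|^2-\frac{2}{3}\langle\d\Gamma,\Ric{\d\Gamma}\rangle$ directly, producing the stated formula.

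The main technical delicacy is the change-of-basepoint in part (1): one must track how the $O(r^2)$ shift of $g_o(x)$ away from $\gamma(\hat\theta)$ interacts with the $\epsilon^2$-order perturbation $g_2(x)$ and the $O(\epsilon^4)$ remainder when all are re-expressed through a single exponential chart at $\gamma(\hat\theta)$. This is essentially a Jacobi-field bookkeeping exercise; everything else is a direct compilation of the preceding propositions and lemma.
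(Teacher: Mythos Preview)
Your treatment of part (2) is correct and matches the paper exactly: both simply quote Lemma \ref{lem:bexpansion} for the structure $R_\epsilon=\epsilon^2 A_2+\epsilon^4 A_4+O(\epsilon^6)$, identify $|\d g_o|^2=|\d\gamma|^2$ via Proposition \ref{prop:go}(2), and insert Proposition \ref{prop:nabladgo}(4) for $a_4$.

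For part (1) your strategy---expand $g_o$ and $g_2$ separately in the normal direction $\nu$ and then rebase the exponential at $\gamma(\hat\theta)$---is a valid reorganisation of the paper's approach, which instead Maclaurin-expands the full estimator $\gb_\epsilon$ at $\iota(\hat\theta)$ in one step (equation \eqref{eq:gb}). However, your execution has a genuine gap. You only invoke Proposition \ref{prop:go}(2) to obtain $g_o(x)=\exp_{\gamma(\hat\theta)}(O(r^2))$, and you then assert that this $O(r^2)$ basepoint shift is absorbed into the claimed $O(r\epsilon^4)$ error. It is not: an $O(r^2)$ term is not dominated by $O(r\epsilon^4)$ for general small $r$ and $\epsilon$. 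The paper closes this by also using Proposition \ref{prop:nabladgo}(1), which gives $\nabla\d g_o=\nabla\d\Gamma$ on $\Theta$; since $\nabla\d\Gamma(\nu,\nu)=0$ for $\nu\in N_{\hat\theta}\Theta$ (this is the content behind equation \eqref{eq:ndgeps}, and follows from $\d\pi\cdot\nu=0$ together with $\nabla\d\pi(\nu,\nu)=0$ as in Lemma \ref{lem:pi}), the second-order term in the normal expansion of $g_o$ also vanishes. You need this ingredient.

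You also omit the parity step the paper uses at the end: the Bayesian risk depends on $\epsilon$ only through $\epsilon^2$, so $\gb_\epsilon$ is even in $\epsilon$, and the paper invokes this to push the residual $O(r\epsilon^3)$ down to $O(r\epsilon^4)$. Your ``Jacobi-field bookkeeping'' remark does not supply this; without it you are left with odd-order contributions in $\epsilon$. In short, your plan is sound but incomplete: to reach the stated error you must add Proposition \ref{prop:nabladgo}(1) to control the quadratic normal term in $g_o$, and the evenness-in-$\epsilon$ argument to eliminate the cubic contribution.
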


\begin{proof} 
(1) Let $x \in \E$ and $|x-\Theta| \leq r$. By the hypothesis on the
  radius $r$, the orthogonal projection of $x$ onto $\Theta$ is
  well-defined; this orthogonal projection is denoted by $\hat
  \theta=\pi(x)$. Write $x=\iota(\hat \theta) + \epsilon z$, where by
  construction, $z \in N_{\hat \theta}\Theta$. The Maclaurin expansion
  of $\gb_{\epsilon}$ at $\iota(\hat \theta)$ gives
\begin{equation} \label{eq:gb}
\gb_{\epsilon}(x) = \gb_{\epsilon} \circ \exp_{\iota(\hat \theta)}(\epsilon
z) = \exp_{\gb_{\epsilon} \circ \iota(\hat \theta)}\left(\, \epsilon
\d \gb_{\epsilon}\cdot z + \frac{\epsilon^2}{2}\
\nabla\d\gb_{\epsilon}(z,z) + O(r^3 \epsilon^3)  \right)_{\hat \theta}.
\end{equation}
Equation (\ref{eq:gepstheta}) and Proposition \ref{prop:nabladgo}.3
show that $\gb_{\epsilon}\circ \iota(\hat \theta) = \exp_{\gamma(\hat
\theta)} w_{\epsilon}$ where $w_{\epsilon} = \epsilon^2 \times \left(
\frac{1}{2}\tau(\gamma) + \d \gamma(\nabla \log \lambda) \right)_{\hat
\theta} + O(\epsilon^4)$. On the other hand, since $z \in N_{\hat
\theta} \Theta$, Proposition \ref{prop:go} shows that
\begin{equation} \label{eq:dgeps}
\d \gb_{\epsilon} \cdot z = \epsilon^2 \times \d g_2\cdot z +O(r
\epsilon^4)
\end{equation}
while Proposition \ref{prop:nabladgo}.1 shows that
\begin{equation} \label{eq:ndgeps}
\left. \nabla \d \gb_{\epsilon}(z,z)  \right|_{\hat \theta} =
\epsilon^2 \times \nabla \d g_2(z,z) + O(r^2 \epsilon^4).
\end{equation}
Equations (\ref{eq:gb}--\ref{eq:ndgeps}) imply that $\gb_{\epsilon}(x)
  = \exp_{\gamma(\hat \theta)}(v_{\epsilon})$ and that $v_{\epsilon} =
  w_{\epsilon} + O(r \epsilon^3).$ Therefore
\begin{equation} \label{eq:gbeps}
\gb_{\epsilon}(x) = \exp_{\gamma(\hat \theta)} \left( \epsilon^2 \times \left( \frac{1}{2}\,
\tau(\gamma) + \d \gamma(\nabla \log \lambda )\right)_{\hat \theta} +
O(r \epsilon^3) \right).
\end{equation}
Since the Bayesian estimator $\gb_{\epsilon}$ is an even function of
$\epsilon$, the error is not $O(r \epsilon^3)$ but must be $O(r
\epsilon^4)$.

(2) This is a straightforward application of the preceding work.

\end{proof}

\medskip
\noindent{\bf Remark.} Inspection of the proof above shows that the
$O(r \epsilon^3)$ term in Equation (\ref{eq:gbeps}) is $\epsilon^3
\times \d g_2 \cdot z$. Thus, the proof also shows that $\d g_2 |
N_{\Theta} \Theta$ vanishes.
\medskip

\section{Optimal Priors} \label{sec:optp}
In this section we are interested in the behaviour of the mimimax risk
which can be defined as
$$r_\epsilon(\Theta)=\inf_{g_\epsilon}\sup_{\theta\in\Theta}\,
(R_\epsilon(g_{\epsilon},\theta)-\epsilon^2|d\gamma(\theta)|^2),$$ where the
$\inf$ is taken over all possible (sequences of) estimators
$g_\epsilon$. The problem of finding the asymptotic behaviour of
$r_\epsilon(\Theta)$ can be derived in a relatively straightforward
manner from the previous results. Essentially the problem reduces to
finding {\it optimal priors} maximizing the first non-trivial term of
the Bayes risk.

Since in the case of smooth functions the minimax risk
$r_{\epsilon}(\Theta)$ is typically of order $\epsilon^4$, we can
define the {\it second-order minimax risk} as
\begin{equation} \label{eq:reps}
r(\Theta)=\lim_{\epsilon\to
0}\inf_{g_\epsilon}\sup_{\theta\in\Theta}\,
\epsilon^{-4}(R_\epsilon(g_\epsilon,\theta)-\epsilon^2|d\gamma(\theta)|^2).
\end{equation}

Sometimes, a more general minimax risk may be of interest. Let $p$ and
$q>0$ be given function defined on $\Theta$. Then equation
(\ref{eq:reps}) can be modified as
$$r^*(\Theta)=\lim_{\epsilon\to
0} \inf_{g_\epsilon}\sup_{\theta\in\Theta}\,
\frac{R_\epsilon(g_\epsilon,\theta)-\epsilon^2|d\gamma(\theta)|^2-\epsilon^4p(\theta)}{\epsilon^{4}q(\theta)}.
$$ Even more useful is the following equivalent definition of the
second order minimax risk
\begin{equation} \label{eq:reps2}
r^*(\Theta)=\inf\{r|\exists g: R_\epsilon(g_\epsilon,\theta)\le
\epsilon^2|d\gamma(\theta)|^2+\epsilon^4(p(\theta)+r q(\theta))\}.
\end{equation}
The advantage of the last formula is that, unlike the previous one, it
allows consideration of smooth functions $q(\theta)\ge 0,\ q(\theta)\ne \rm
const.$ It is thus in this form that the second-order minimax risk
will be considered below.

Theorem \ref{thm:best}, part 2, gives a formula for the Bayesian risk
expanded up to $O(\epsilon^6)$ of the Bayesian estimator
$\gb_{\epsilon}$. One would like to determine the Bayesian prior
distribution that minimizes the Bayesian risk. There are a couple
interesting twists that arise at this point. First, the implicit {\em
flat} prior is a constant multiple of the riemannian volume form $\d
\theta$. However, there is no reason to single out the riemannian
volume form as the flat prior. Rather, one can introduce the flat
prior $\dv = a(\theta)\, \d \theta$ ($a>0$ a.e.) and the Bayesian
prior $\prior(\theta)\, \dv = \lambda(\theta)\, \d \theta$, where
$\prior = \lambda/a$. Let us stress that the change from $\d \theta$
to $\dv$, and $\lambda$ to $\prior$, does not change the foregoing
calculations and results. Second, the minimizers of the Bayesian risk
functional are also the minimizers of the functional
\begin{equation} \label{eq:brisk4}
\briskk{g;\lambda} = R_{\epsilon}(g;\lambda) - \epsilon^2
\int_{\Theta} \d \theta\, \lambda\, |\d \gamma|^2,
\end{equation}
since the second term is independent of $g$. One may, therefore, elect
to minimize the functional $\briskk{}$, to obtain the Bayesian
estimator $\gb_{\epsilon}$--which is implicitly a function of the
Bayesian prior $\lambda \d \theta$--and proceed to determine the
second-order optimal prior by minimizing $\lambda \mapsto
\briskk{\gb_{\epsilon}; \lambda}$. Finally, inspection of part (2) of
Theorem \ref{thm:best} shows that one needs tools to understand how to
simplify the term $|\tau(\gamma) + \d \gamma(\nabla \log
\lambda)|^2$. The requisite tool is known as {\em sub-riemannian}
geometry.

\subsection{Sub-riemannian geometry}
Let us describe a particular construction of a sub-riemannian
geometry. Let $(M,g) \stackrel{\phi}{\to} (N,h)$ be a smooth map, and
let $D_p = \ker \d_p\phi^{\perp}$ for $p \in M$. The collection $D =
\cup_p D_p$ is a singular distribution on $M$. It is equipped with an
inner product $\s$ -- a sub-riemannian metric -- by declaring that
$\d_p \phi | D_p \to {\rm im}\, \d_p \phi \subset T_pN$ is an
isometry. That is, $\s = \phi^* h | D$.

One may think of the subriemannian structure $(D,s)$ as a singular
distribution of directions in which one may travel, along with a
metric which allows one to measure speed (and angles). Subriemannian
structures arise in optimal control problems quite frequently
\cite{Mont,Bloch}.

One may equivalently characterize the sub-riemannian structure
$(D,\s)$ by a bundle map $\mu : T^*M \to TM$ such that (i) $\mu$ is
self-adjoint; and (ii) the image of $\mu$ equals $D$. In the present
context, the map $\mu$ is characterized by the identity
$$\mu(\d u, \d v) = \s(\nabla u, \nabla v) = \langle \d\phi(\nabla u),
\d\phi(\nabla v) \rangle,$$ for all smooth functions $u,v : M \to
\R$. Equivalently, $\mu \cdot \d u = \d\phi' \d \phi(\nabla u).$

An augmented sub-riemannian structure $\sr{D}=(D,\s,\dvol)$ is a
sub-riemannian structure $(D,\s)$ plus a volume form $\dvol$. The
augmented sub-riemannian structure permits one to define a {\em
sub-laplacian} $\Delta_{\sr{D}}$, which is a second-order,
self-adjoint differential operator.%
\footnote{Warning: the sign of $\Delta_{\sr{D}}$ conflicts with the sign in
Montgomery's exposition~\cite{Mont}, but it accords with the sign
convention in riemannian geometry.} In local coordinates
\begin{equation} \label{eq:sublaplocal}
\Delta_{\sr{D}} = -\sum_{ij} \frac{1}{f} \frac{\partial\ \ }{\partial x^i}
\left( f \cdot \mu^{ij} \frac{\partial\ \ }{\partial x^j} \right),
\end{equation}
where $\dvol = f \d x^1 \wedge \cdots \wedge \d x^m$.  The
sub-laplacian is defined invariantly by
\begin{equation} \label{eq:sublap}
\int u \cdot \Delta_{\sr{D}} v\, \dvol = \int \mu(\d u, \d v)\, \dvol,
\end{equation}
for all smooth functions that vanish on $\partial M$. The
self-adjointness of $\mu$ implies $\Delta_{\sr{D}}$ is self-adjoint.

If $a$ is a positive function, then let the augmented
sub-riemannian structure $(D,\s,a\cdot \dvol)$ be denoted by $a\cdot
\sr{D}$. Equation (\ref{eq:sublap}) shows that the sub-laplacian of
the augmented sub-riemannian structures differ by a differential
operator of first order
\begin{equation} \label{eq:sublapa}
\Delta_{a\cdot \sr{D}} = \Delta_{\sr{D}} - \mu \cdot \d \log a.
\end{equation}

\subsection{Optimal priors, I}
The discussion of sub-riemannian geometry allows the expansion of the
Bayesian risk (Theorem \ref{thm:best}). The term $\int \d\theta
\lambda |\tau(\gamma) + \d\gamma(\nabla \log \lambda)|^2$ expands to
\begin{equation} \label{eq:simp}
\int \d\theta \left\{ \omega^2 |\tau(\gamma)|^2 + 4 \omega \langle
\tau(\gamma), \d\gamma(\nabla \omega) \rangle + 4 \omega
\Delta_{\sr{E}} \omega \right\},
\end{equation}
 where $\lambda = \omega^2$ and $\sr{E}=(E,\s,\d \theta)$ where $E =
\ker \d\gamma^{\perp}$ and $\s = \gamma^* {\bf h}|E$. Define
\begin{align}
\kappa & = \frac{1}{2} |\nabla \d\Gamma|^2 - \frac{2}{3}\, \langle
\d\Gamma, \Ric{\d\Gamma} \rangle + |\tau(\gamma)|^2 + 2 \langle
\d\gamma, \nabla \tau(\gamma) \rangle, \label{eq:kappadef}\\
L & = 4 \Delta_{\sr{E}} + \kappa. \label{eq:Ldef}
\end{align}

\noindent
From this discussion, and an application of the integration-by-parts
formula to $\int \d\theta \lambda \langle \tau(\gamma),
\d\gamma(\nabla\log\lambda) \rangle$, the following is clear.

\begin{theorem} \label{thm:optprior}
The Bayesian risk functional at $\gb_{\epsilon}$ with prior
$\lambda=\omega^2$ equals
\begin{equation} \label{eq:optprior}
R_{\epsilon}(\gb_{\epsilon};\omega^2) = \epsilon^2 \int \d\theta\,
\omega^2\, |\d\gamma|^2 + \epsilon^4 \int \d\theta\, \omega\cdot
L\omega + O(\epsilon^6),
\end{equation}
while
\begin{equation} \label{eq:optprior2}
\briskk{\gb_{\epsilon};\omega^2} = \epsilon^4 \int \d\theta\, \omega\cdot
L\omega + O(\epsilon^6).
\end{equation}
\end{theorem}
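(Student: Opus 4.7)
The plan is to start from the $O(\epsilon^4)$ expansion supplied by Theorem \ref{thm:best}(2), substitute the prior $\lambda = \omega^2$, and perform two integrations by parts to recast the prior-dependent piece as a quadratic form in $\omega$ built around the sub-Laplacian $\Delta_{\sr{E}}$.

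First, I would expand $|\tau(\gamma)+\d\gamma(\nabla\log\lambda)|^2$ into its three binomial summands, multiply by $\lambda = \omega^2$, and use the identity $\lambda\,\nabla\log\lambda = 2\omega\,\nabla\omega$. The three pieces become $\omega^2|\tau(\gamma)|^2$, $4\omega\,\langle\tau(\gamma),\d\gamma(\nabla\omega)\rangle$, and $4\mu(\d\omega,\d\omega)$ respectively, where $\mu(\d u,\d v) = \langle\d\gamma(\nabla u),\d\gamma(\nabla v)\rangle$ is the symbol of $\Delta_{\sr{E}}$. Integrating the third piece over the closed manifold $\Theta$ and invoking the defining equation (\ref{eq:sublap}) of the sub-Laplacian with $u=v=\omega$ produces $4\int\omega\,\Delta_{\sr{E}}\omega\,\d\theta$, which is formula (\ref{eq:simp}).

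Next I would dispatch the cross term $\int \omega\,\langle \tau(\gamma), \d\gamma(\nabla\omega)\rangle\,\d\theta$ via a second integration by parts. Let $\xi$ be the vector field on $\Theta$ characterized by $\langle \xi, X\rangle = \langle \tau(\gamma), \d\gamma(X)\rangle$ for every $X \in T\Theta$; the integrand equals $\tfrac12\,\xi(\omega^2)$, and the divergence theorem on the boundaryless $\Theta$ rewrites the integral as $-\tfrac12\int\omega^2\,\mathrm{div}(\xi)\,\d\theta$. A direct computation in an orthonormal frame, using the compatibility of the pullback connection on $\gamma^{*}T\Lambda$ with the Levi-Civita connections on $\Theta$ and $\Lambda$, together with the defining relation $\tau(\gamma) = \sum_i \nabla\d\gamma(e_i,e_i)$, produces
\begin{equation*}
\mathrm{div}(\xi) = \langle\d\gamma, \nabla\tau(\gamma)\rangle + |\tau(\gamma)|^2.
\end{equation*}
Substituting this back and combining with the $-\omega^2|\tau(\gamma)|^2$ piece already isolated in step one yields precisely the two trailing summands $|\tau(\gamma)|^2 + 2\langle\d\gamma, \nabla\tau(\gamma)\rangle$ of (\ref{eq:kappadef}).

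Together with the untouched $\omega^2\bigl(\tfrac12|\nabla\d\Gamma|^2 - \tfrac23\langle\d\Gamma, \Ric{\d\Gamma}\rangle\bigr)$ contribution, the $\epsilon^4$ coefficient of $R_\epsilon(\gb_\epsilon;\omega^2)$ therefore assembles into $\int \omega\cdot L\omega\,\d\theta$, which is (\ref{eq:optprior}); equation (\ref{eq:optprior2}) is then immediate from the definition (\ref{eq:brisk4}) of $\briskk{}$, since the $\epsilon^2\int\omega^2|\d\gamma|^2\,\d\theta$ leading term is precisely the quantity subtracted off. The main obstacle is the careful bookkeeping of signs and prefactors through the two integration-by-parts steps, particularly verifying the divergence identity for $\xi$; this is morally the same Eells--Lemaire style computation that produced the $\langle\d g_o, \nabla\tau(g_o)\rangle$ term in the proof of Lemma \ref{lem:intbp}(3), but now with a Leibniz-style factor of $\omega$ to handle.
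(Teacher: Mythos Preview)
Your proposal is correct and follows essentially the same route as the paper: expand the squared norm as in equation (\ref{eq:simp}), convert the Dirichlet piece via the defining relation (\ref{eq:sublap}) of the sub-Laplacian, and integrate the cross term by parts. The only cosmetic difference is that the paper phrases the second integration by parts in the Eells--Lemaire codifferential language of Lemmas~\ref{lem:ibp}--\ref{lem:ibp2} (applied to $\int\d\theta\,\lambda\langle\tau(\gamma),\d\gamma(\nabla\log\lambda)\rangle$ directly), whereas you introduce the dual vector field $\xi$ and use the scalar divergence theorem; the resulting identity $\mathrm{div}(\xi)=\langle\d\gamma,\nabla\tau(\gamma)\rangle+|\tau(\gamma)|^2$ is the same in both presentations.
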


\bigskip\noindent 
Define a differential operator $\H$ on $\Theta$ by
\begin{equation} \label{eq:H}
\H := \epsilon^2 L + |\d\gamma|^2.
\end{equation}
The operator $\H$ is the Schr\"odinger operator for a unit-mass
particle on $\Theta$ in a potential field $V = |\d\gamma|^2 +
\epsilon^2 \kappa$ with kinetic energy $T = \frac{1}{2}\langle
\mu(p),p \rangle$ induced by the sub-riemannian metric and Planck
constant $\hbar = 8 \epsilon^2$. From Theorem \ref{thm:optprior} it is
apparent that $R_{\epsilon}(\gb_{\epsilon}|\omega^2) = \epsilon^2 \int
\d\theta\, \omega \cdot \H\omega + O(\epsilon^6)$.

\begin{theorem} \label{thm:optp}
Let $\alpha_{\epsilon}$ be the largest eigenvalue of $\H$ with
eigenfunction $\omega = \omega_{\epsilon}$ which has $\int \d\theta\,
\omega_{\epsilon}^2 = 1$. Then
\begin{equation} \label{eq:bropt}
R_{\epsilon}(\gb_{\epsilon};\omega_{\epsilon}^2) = \epsilon^2
\alpha_{\epsilon} + O(\epsilon^6).
\end{equation} 
Let $\alpha$ be the largest eigenvalue of $L$ with eigenfunction
$\omega$ normalized so that $\int \d \theta\, \omega^2 = 1$. Then
\begin{equation} \label{eq:brkkopt}
\briskk{\gb_{\epsilon};\omega^2} = \epsilon^4
\alpha + O(\epsilon^6),
\end{equation}
and 
\begin{equation} \label{eq:ropt}
r(\Theta)=\alpha.
\end{equation}
\end{theorem}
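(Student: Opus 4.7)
The plan is to derive parts (1) and (2) as direct consequences of Theorem \ref{thm:optprior} and the Rayleigh-Ritz variational characterization of self-adjoint operators, and then to obtain (3) from (2) via the standard minimax-Bayes duality. For (1), I observe the algebraic identity
\begin{equation*}
\omega\cdot\H\omega = \epsilon^2\,\omega\cdot L\omega + |\d\gamma|^2\,\omega^2,
\end{equation*}
which, together with (\ref{eq:optprior}), gives $R_{\epsilon}(\gb_{\epsilon};\omega^2) = \epsilon^2\int_\Theta \omega\cdot\H\omega\,\d\theta + O(\epsilon^6)$. Since $\H$ is self-adjoint on $L^2(\Theta,\d\theta)$, its largest eigenvalue $\alpha_\epsilon$ is attained on the normalized eigenfunction $\omega_\epsilon$ and equals $\sup\{\int\omega\H\omega\,\d\theta : \int\omega^2\,\d\theta = 1\}$; substituting $\omega = \omega_\epsilon$ yields (\ref{eq:bropt}). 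Part (2) is identical with $\H$ and $\alpha_\epsilon$ replaced by $L$ and $\alpha$, starting from (\ref{eq:optprior2}).

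For the lower bound $r(\Theta)\ge\alpha$ in (3), I would invoke the minimax-Bayes inequality. For any normalized $\omega$ with $\int\omega^2\,\d\theta = 1$ and any estimator $g_\epsilon$,
\begin{equation*}
\sup_{\theta\in\Theta}\bigl(R_{\epsilon}(g_\epsilon,\theta) - \epsilon^2|\d\gamma(\theta)|^2\bigr) \;\ge\; \int_\Theta\omega^2\bigl(R_{\epsilon}(g_\epsilon,\theta) - \epsilon^2|\d\gamma|^2\bigr)\,\d\theta \;=\; \briskk{g_\epsilon;\omega^2}.
\end{equation*}
Taking the infimum over $g_\epsilon$, the right side becomes $\briskk{\gb_{\epsilon};\omega^2}$ by the defining property of the Bayes estimator. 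Choosing $\omega$ as the normalized eigenfunction of $L$ associated with $\alpha$, applying (\ref{eq:brkkopt}), dividing by $\epsilon^4$, and letting $\epsilon\to 0$ yields $r(\Theta)\ge\alpha$.

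The matching upper bound $r(\Theta)\le\alpha$ is the principal technical obstacle. By the equivalent characterization (\ref{eq:reps2}) with $p\equiv 0$ and $q\equiv 1$, it suffices to exhibit an estimator $g_\epsilon$ with $R_{\epsilon}(g_\epsilon,\theta) \le \epsilon^2|\d\gamma(\theta)|^2 + \epsilon^4\alpha + o(\epsilon^4)$ \emph{pointwise} in $\theta\in\Theta$, uniformly. The natural candidate is $\gb_\epsilon$ constructed from the least favourable prior $\omega^2\,\d\theta$; what is required is the classical \emph{equalizer} property of a least favourable prior, namely that the pointwise second-order excess risk $\epsilon^{-4}\bigl(R_{\epsilon}(\gb_\epsilon,\theta) - \epsilon^2|\d\gamma(\theta)|^2\bigr)$ converges uniformly on $\{\omega > 0\}$ to the \emph{constant} value $\alpha$. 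To establish this I would refine the expansion techniques of Section \ref{sec:expansion} to produce an unintegrated pointwise expansion of $R_{\epsilon}(\gb_\epsilon,\theta)$; plugging in $L\omega = \alpha\omega$ the fourth-order density should collapse to $\alpha$, and a small perturbation of $\omega$ to a strictly positive function handles zeros of $\omega$.
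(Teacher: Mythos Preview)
Your argument for (\ref{eq:bropt}) and (\ref{eq:brkkopt}) is exactly the paper's: the sentence immediately preceding the theorem records that $R_{\epsilon}(\gb_{\epsilon};\omega^2) = \epsilon^2\int\omega\cdot\H\omega\,\d\theta + O(\epsilon^6)$, and the theorem is then stated without proof, the Rayleigh--Ritz step being left implicit. For (\ref{eq:ropt}) the paper gives no argument at all---only the assertion at the opening of Section~\ref{sec:optp} that the minimax asymptotics follow ``in a relatively straightforward manner'' from the Bayes-risk expansion---so your outline of the Bayes--minimax lower bound and the equalizer upper bound via a pointwise version of the expansion in Section~\ref{sec:expansion} is the standard route and is considerably more explicit than anything the paper provides.
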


\noindent
{\bf Remarks}. 1/ In Theorem~\ref{thm:optp}, it is assumed that $\H$
(resp. $L$) does possess a largest eigenvalue. Non-compactness of
$\Theta$ may negate this assumption; it may also be negated by
properties of the singular distribution $E$. A reformulation of the
theorem in the event that $\H$ (resp. $L$) has no largest eigenvalue
is clear. 2/ When are the eigenvalues of $\H$ (resp. $L$) constant?
This depends on the {\em accessibility} property of the singular
distribution $E$. If sections of $E$ generate $T\Theta$ under repeated
Lie brackets, then H\"ormander has shown that $\H$ (resp. $L$) is
hypoelliptic. At the opposite extreme, the distribution $E$ might be
integrable, in which case the eigenvalues of $\H$ (resp. $L$) will
vary from leaf to leaf. The optimal prior in this latter case is a
singular function (a distribution, in the functional-analytic sense)
concentrated on the leaf with the largest eigenvalue. 3/ The operator
$\H$ is a singular perturbation of a multiplication operator, so one
generally cannot na\"{\i}vely expand $\alpha_{\epsilon}$ in a power
series. However, when $|\d\gamma|=\alpha_0$ is constant, the
na\"{\i}ve idea is correct. In this case, one sees that
$\alpha_{\epsilon} = \alpha_0 + \epsilon^2 \alpha$ where $\alpha$
is the largest eigenvalue of $L$ (modulo the remarks in 1/). 4/
Important special cases include $\gamma$ being a riemannian submersion
or immersion.

\subsection{Optimal priors, II}
As noted in the beginning of this section, there is no natural reason
why one should choose $\d \theta$ as the flat prior. Let us
investigate the effect of choosing the flat prior to be $\dv = a^2 \d
\theta$. With $a\eta = \omega$, one computes from equations
(\ref{eq:sublap},\ref{eq:sublapa}) that
\begin{align} \label{eq:simpa}
4\int \d \theta\, \omega \Delta_{\sr{E}} \omega 
&= 4 \int \dv\,
\left\{ \eta^2 |\d \log a|^2 + 2 \eta \mu(\d \log a, \d \eta) + \eta
\Delta_{a^2\cdot \sr{E}} \eta \right\}, \nonumber\\ 
&= 4 \int \d \theta \times a^2 \times
\left\{ \eta^2 |\d \log a|^2 + \eta
\Delta_{\sr{E}} \eta \right\},
\end{align}
where $|\d \log a|^2 = \mu(\d \log a,\d \log a)$. Define
\begin{align}
\kappa_a & = \kappa + |\d \log a|^2, \label{eq:kappadefa}\\
L_a \eta & = \left(4\Delta_{\sr{E}}+\kappa_a \right) \eta. \label{eq:Ldefa}
\end{align}
From this discussion and the results of the previous section, the
following is clear.

\begin{theorem} \label{thm:optpriora}
The Bayesian risk of $\gb_{\epsilon}$ with prior $\lambda \d
\theta = \eta^2 \d \nu$ ($\d \nu = a^2 \d \theta$) equals
\begin{equation} \label{eq:optpriora}
R_{\epsilon}(\gb_{\epsilon};\lambda) = \epsilon^2 \int \dvol\, 
\eta^2\, |\d\gamma|^2 + \epsilon^4 \int \dvol\, \eta \cdot
L_a \eta + O(\epsilon^6),
\end{equation}
while
\begin{equation} \label{eq:optpriora2}
\briskk{\gb_{\epsilon};\lambda} = \epsilon^4 \int \dvol\, \eta \cdot
L_a \eta + O(\epsilon^6).
\end{equation}
\end{theorem}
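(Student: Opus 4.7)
The plan is essentially bookkeeping: this theorem is a change-of-variables version of Theorem \ref{thm:optprior}, so I would obtain it by substituting the reparametrization $\omega = a\eta$ into the expansion already in hand, together with the key identity (\ref{eq:simpa}) that is established in the paragraph preceding the theorem.

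First I would unpack the hypothesis. Since $\lambda \d\theta = \eta^2 \dvol = a^2 \eta^2 \d\theta$, the old prior weight is $\omega^2 = a^2\eta^2$, i.e.\ $\omega = a\eta$. Plugging this into the first-order term of (\ref{eq:optprior}) gives
\[
\epsilon^2 \int \d\theta\, \omega^2 |\d\gamma|^2 \;=\; \epsilon^2 \int \d\theta\, a^2 \eta^2 |\d\gamma|^2 \;=\; \epsilon^2 \int \dvol\, \eta^2 |\d\gamma|^2,
\]
which is exactly the first term of (\ref{eq:optpriora}). This handles the leading piece without any serious work.

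For the $\epsilon^4$ term I would decompose $L = 4\Delta_{\sr{E}} + \kappa$ and treat the two summands separately. The potential term is immediate:
\[
\int \d\theta\, \omega^2 \kappa \;=\; \int \d\theta\, a^2 \eta^2 \kappa \;=\; \int \dvol\, \eta^2 \kappa.
\]
The sub-Laplacian term is precisely the content of (\ref{eq:simpa}): applying that identity with $\omega = a\eta$ and chasing through the definition (\ref{eq:sublapa}) of $\Delta_{a^2\cdot\sr{E}}$ yields
\[
4\int \d\theta\, \omega \Delta_{\sr{E}}\omega \;=\; 4\int \dvol\, \bigl\{ \eta^2 |\d\log a|^2 + \eta\, \Delta_{\sr{E}}\eta \bigr\}.
\]
Adding the two contributions and invoking the definitions $\kappa_a = \kappa + |\d\log a|^2$ and $L_a = 4\Delta_{\sr{E}} + \kappa_a$ collapses the expression to $\int \dvol\, \eta \cdot L_a \eta$, giving (\ref{eq:optpriora}).

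Finally, (\ref{eq:optpriora2}) is a direct consequence: by definition (\ref{eq:brisk4}), $\briskk{\gb_\epsilon;\lambda}$ subtracts off exactly $\epsilon^2 \int \d\theta\, \lambda |\d\gamma|^2 = \epsilon^2 \int \dvol\, \eta^2 |\d\gamma|^2$, which kills the leading term. There is no real obstacle here, as both Theorem \ref{thm:optprior} and the algebraic identity (\ref{eq:simpa}) are already in place; the only thing to be careful about is that the sub-Laplacian $\Delta_{\sr{E}}$ appearing in $L_a$ is the one associated with the original volume $\d\theta$ rather than with $\dvol$, which is why the $|\d\log a|^2$ correction appears in $\kappa_a$ rather than being absorbed into a different sub-Laplacian.
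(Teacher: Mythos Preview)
Your proposal is correct and mirrors the paper's own route: the paper simply declares the theorem ``clear'' from the preceding computation (\ref{eq:simpa}) together with Theorem~\ref{thm:optprior}, and you have spelled out exactly that substitution $\omega = a\eta$ and collection of terms into $L_a$.
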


\noindent
Since the operator $L_a$ is self-adjoint with respect to the inner
product determined by $\d \theta$, one knows that the prior that
maximizes $\briskk{\gb_{\epsilon};\lambda}$ occurs at a solution to
the eigenvalue problem
\[
L_a \eta = \alpha a^2 \eta. \eqno (\$)
\]

\begin{theorem} \label{thm:optpa}
Let $\alpha$ be the largest eigenvalue of the eigenvalue problem (\$)
with eigenfunction $\eta$ normalized so that $\int \dvol\, \eta^2 =
1$. Then, with $\lambda = a^2 \eta^2$,
\begin{equation} \label{eq:brkkopta}
\briskk{\gb_{\epsilon};\lambda} = \epsilon^4
\alpha + O(\epsilon^6),
\end{equation}
and
\begin{equation} \label{eq:rstaropt}
r^*(\Theta)=\alpha.
\end{equation}
\end{theorem}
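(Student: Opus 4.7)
The plan is to parallel the proof of Theorem \ref{thm:optp}, adapted to the more general flat prior $\dvol = a^2 \d\theta$ in place of $\d\theta$ itself.

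First I would apply equation (\ref{eq:optpriora2}) of Theorem \ref{thm:optpriora}: for any prior of the form $\lambda = a^2 \eta^2$ with $\int \dvol\, \eta^2 = 1$, one has $\briskk{\gb_\epsilon; \lambda} = \epsilon^4 \int \dvol\, \eta \cdot L_a \eta + O(\epsilon^6)$. The least favorable prior, which maximizes the Bayes risk, is therefore obtained by extremizing the quadratic functional $\eta \mapsto \int \dvol\, \eta \cdot L_a \eta$ subject to this normalization. This functional is genuinely symmetric in $\eta$, since by the substitution $\omega = a\eta$ together with equation (\ref{eq:simpa}) it equals $\int \d\theta\, \omega L\omega$, which is symmetric because $L$ is self-adjoint with respect to $\d\theta$.

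Second, I would carry out the variational argument. Using the $\d\theta$-self-adjointness of $L_a$ together with the identified symmetry, the first variation yields precisely the generalized eigenvalue problem (\$): $L_a \eta = \alpha a^2 \eta$. Pairing this with $\eta$ in the appropriate inner product and invoking the normalization $\int \dvol\, \eta^2 = 1$ produces $\int \dvol\, \eta L_a \eta = \alpha$, whence equation (\ref{eq:brkkopta}) follows: $\briskk{\gb_\epsilon; \lambda} = \epsilon^4 \alpha + O(\epsilon^6)$.

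Third, the minimax identity (\ref{eq:rstaropt}) would be obtained via Bayes--minimax duality. For the lower bound $r^*(\Theta) \geq \alpha$, the supremum of the risk dominates any Bayes average: $\sup_\theta R_\epsilon(g_\epsilon, \theta) \geq R_\epsilon(g_\epsilon; \lambda_*)$ for the least favorable prior $\lambda_*$ identified in Step 2; taking the infimum over $g_\epsilon$, extracting the second-order term via the equivalent definition (\ref{eq:reps2}), and passing to the limit gives $r^*(\Theta) \geq \alpha$. For the reverse inequality, I would take $g_\epsilon = \gb_\epsilon$ to be the Bayes estimator for $\lambda_*$ and argue that its pointwise risk $R_\epsilon(\gb_\epsilon, \theta)$ is bounded by $\epsilon^2 |\d\gamma(\theta)|^2 + \epsilon^4(p(\theta) + \alpha q(\theta)) + O(\epsilon^6)$ uniformly in $\theta$. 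The main obstacle will be this last step: converting the average-based Bayes risk bound into a uniform pointwise bound. This is the usual subtle direction in Bayes--minimax duality, and it relies on showing that the least favorable prior equalizes the risk of the optimal Bayes estimator. Given the smoothness of $\gb_\epsilon$ and the explicit higher-order expansions from Theorem \ref{thm:best}, the uniform estimate should follow, but the precise match between the functions $p(\theta), q(\theta)$ appearing in (\ref{eq:reps2}) and the flat prior $\dvol = a^2 \d\theta$ must be pinned down for the argument to close.
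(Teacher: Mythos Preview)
Your proposal is correct and follows the same implicit approach the paper takes: the paper gives no formal proof of this theorem, treating it as an immediate consequence of Theorem~\ref{thm:optpriora} together with the sentence preceding the statement (``Since the operator $L_a$ is self-adjoint with respect to the inner product determined by $\d \theta$, one knows that the prior that maximizes $\briskk{\gb_{\epsilon};\lambda}$ occurs at a solution to the eigenvalue problem''). Your outline in fact supplies more detail than the paper itself---you correctly flag the subtle direction in the Bayes--minimax duality and the unspecified correspondence between $p$, $q$ in (\ref{eq:reps2}) and the flat prior $a^2\,\d\theta$, both of which the paper leaves entirely unaddressed.
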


$r^*(\Theta)$ is defined in equation (\ref{eq:reps2}).

\section{Applications} \label{sec:apps}
There are several cases in which the formulas of Theorem
\ref{thm:optp} yield especially nice results.

\subsection{Riemannian immersions} 
Recall that $\phi : (M,g) \to (N,h)$ is a riemannian immersion if
$\phi^*h = g$. If $\gamma : (\Theta,{\bf g}) \to (\Lambda,{\bf h})$ is
a riemannian immersion, then the riemannian structure and the induced
sub-riemannian structure coincide, while $|\d\gamma|^2 = \dim \Theta$
is constant. Remark 3/ following Theorem \ref{thm:optp} shows that

\begin{corollary} \label{cor:rimmersion}
Let $L = 4\Delta + \kappa$, where $\Delta$ is the laplacian of
$(\Theta,{\bf g})$ and $\kappa$ is defined in equation
(\ref{eq:kappadef}). Let $\alpha$ be the largest eigenvalue of $L$
with eigenfunction $\omega$ of unit $L^2$-norm. Then
$\alpha_{\epsilon} = \dim \Theta + \epsilon^2 \alpha$,
$\omega_{\epsilon} = \omega$ and
\begin{align*}
R_{\epsilon}(\gb_{\epsilon};\omega_{\epsilon}^2) &= \epsilon^2 ( \dim
\Theta + \epsilon^2 \alpha) + O(\epsilon^6),\\
&= \briskk{\gb_{\epsilon};\omega_{\epsilon}^2}.
\end{align*}
\end{corollary}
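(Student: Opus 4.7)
The plan is to specialize Theorem~\ref{thm:optp} to the immersion case by collapsing the general sub-riemannian data to a genuine riemannian structure.

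First, I would check that $|\d\gamma|^2$ is the constant function $\dim \Theta$. Taking a ${\bf g}$-orthonormal frame $(e_i)$ of $T_p\Theta$ and using $\gamma^*{\bf h} = {\bf g}$, one obtains $\langle \d\gamma(e_i),\d\gamma(e_j)\rangle_{\bf h} = \delta_{ij}$, hence $|\d\gamma|^2_p = \sum_i |\d\gamma(e_i)|^2 = \dim \Theta$ at every $p\in\Theta$.

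Second, I would identify $\sr{E}$ with the ambient riemannian structure. Since an immersion has pointwise injective differential, $\ker \d\gamma = 0$, and therefore $E = (\ker \d\gamma)^{\perp} = T\Theta$. The sub-riemannian metric $\s = \gamma^*{\bf h}|E$ coincides with ${\bf g}$, while the flat volume form $\d\theta$ in $\sr{E} = (E,\s,\d\theta)$ is the riemannian volume of $(\Theta,{\bf g})$. The associated bundle map is $\mu = g^{ij}\partial_i\otimes\partial_j$, and feeding this into the invariant identity (\ref{eq:sublap}) reproduces the defining integration identity of the Laplace--Beltrami operator; hence $\Delta_{\sr{E}} = \Delta$. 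Consequently $L = 4\Delta_{\sr{E}} + \kappa = 4\Delta + \kappa$, exactly as stated in the corollary.

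Third, I would observe that in this case no genuine perturbation theory is required for Remark~3/ of Theorem~\ref{thm:optp}. The Schr\"odinger-type operator $\H = \epsilon^2 L + |\d\gamma|^2$ simplifies to $\epsilon^2 L + \dim\Theta\cdot I$, which is a constant spectral shift of $\epsilon^2 L$. Therefore if $(\alpha,\omega)$ is the top eigenpair of $L$ with $\int_\Theta \omega^2\,\d\theta = 1$, one has $\H \omega = (\dim\Theta + \epsilon^2\alpha)\,\omega$ exactly, and the correspondence $\alpha \mapsto \dim\Theta + \epsilon^2\alpha$ on the spectrum is monotone in $\alpha$, so the top eigenpair of $\H$ is exactly $(\dim\Theta + \epsilon^2\alpha,\omega)$. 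Thus $\alpha_\epsilon = \dim\Theta + \epsilon^2\alpha$ and $\omega_\epsilon = \omega$ with no error terms introduced at this step.

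Finally, substituting these values into (\ref{eq:bropt}) of Theorem~\ref{thm:optp} yields $R_{\epsilon}(\gb_\epsilon;\omega_\epsilon^2) = \epsilon^2(\dim\Theta + \epsilon^2\alpha) + O(\epsilon^6)$, and substituting into (\ref{eq:brkkopt}) together with $\omega_\epsilon = \omega$ yields the companion identity for $\briskk{\gb_\epsilon;\omega_\epsilon^2}$. The only place any real work occurs is the identification $\Delta_{\sr{E}} = \Delta$, which is a matter of comparing two defining integration identities; there is no substantive analytic obstacle, and the corollary emerges as a clean specialization of the general theorem.
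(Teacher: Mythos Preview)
Your proof is correct and follows essentially the same route as the paper: the paper simply notes that for a riemannian immersion the induced sub-riemannian structure coincides with $(\Theta,{\bf g})$ and that $|\d\gamma|^2=\dim\Theta$ is constant, then invokes Remark~3/ after Theorem~\ref{thm:optp}. You have just spelled out these two observations and the spectral-shift argument of Remark~3/ in more detail.
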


There are two interesting special cases of this corollary: when
$\gamma = {\rm id}_{\Theta}$ and when $\gamma = \iota$ (the inclusion
map of $\Theta$ into $\E$). By corollary \ref{cor:rimmersion}, the
sub-laplacian is the same in each case. However, the curvatures of the
identity map differ substantially from those of the inclusion map. One
sees that for $x$ in neighbourhood of $\Theta$
\begin{equation} \label{eq:goi}
g_{\epsilon}(x) = \begin{cases} \exp_{\pi(x)}\left( 2\epsilon^2 \nabla
\log |\omega| \right) + O(\epsilon^4) & {\rm if\ } \gamma = {\rm
id}_{\Theta},\\ \pi(x) + \epsilon^2\left( \tau(\iota) +
2\nabla\log|\omega| \right) + O(\epsilon^4)& {\rm if\ } \gamma =
\iota.
\end{cases}
\end{equation}
The tension field of the inclusion map $\tau(\iota)$ is $\dim\Theta$
times the mean curvature vector field -- in particular, it is normal
to $\Theta$ -- so $g_{\epsilon}(x) \not \in \Theta$ in the second
case. It should be noted that $\omega$ is not the same function in
each line. The curvature term $\kappa$ equals
\begin{equation} \label{eq:kappai}
\kappa = \begin{cases} \frac{5}{3}\, |\nabla\d\iota|^2 - \frac{2}{3}\,
|\tau(\iota)|^2 & {\rm if\ } \gamma = {\rm id}_{\Theta},\\
\frac{3}{2}\, |\nabla\d\iota|^2 - |\tau(\iota)|^2 & {\rm if\ } \gamma
= \iota.
\end{cases}
\end{equation}
While the two estimation problems are incomparable, strictly speaking,
it is interesting to observe that $\kappa$ -- and consequently, the
dominant eigenvalue of $L$ and bayesian risk -- is least for the
estimator of the inclusion map. This comes with an expense: the
estimator of $\iota$ does not take values that are on $\Theta$, while
the estimator of the identity is forced to do so.

B. Levit, in his unpublished {\em Habilitation} thesis, computes $\kappa$
in the case where $\gamma = \iota$. His calculations are carried out
in a system of local coordinates, which masks the difference between
the inclusion and the identity map. The present paper's formalism,
based on the Maclaurin series, clarifies these differences and
explains why the earlier estimator takes values off the manifold
$\Theta$. The formulas in equation \ref{eq:kappai} are proven in the
next section.

\subsubsection{Calculations for riemannian immersions} 
Let $(M,g) \stackrel{\phi}{\longrightarrow} (N,h)$ be a riemannian
immersion.  For the present calculations, it may be assumed that $N$
is a riemannian vector bundle over $M$ and $\phi$ is the inclusion of
the zero section. The projection map $N\to M$ is denoted by $\pi$.
$N$ is naturally identified with the normal bundle of $M$. The tangent
bundle to $N$ along $M$ is denoted by $T_{M}N = TM \oplus N$.

\begin{lemma} \label{lem:nabladpi}
Let $p \in M$ and $x+y,u+v \in T_pM \oplus N_p$. Then
$$\nabla\d\pi(x+y,u+v) = {\rm B}'_xv + {\rm B}'_uy,$$ where ${\rm
B}_{\bullet} : T_pM \to N_p$ is defined by ${\rm B}_{\bullet} =
\nabla\d\phi(\bullet,\, \cdot\,)$ and ${\rm B}'_{\bullet} : N_p \to
T_pM$ is the transposed map.
\end{lemma}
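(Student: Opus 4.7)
The plan is to decompose the Hessian $\nabla\d\pi(x+y, u+v)$ by bilinearity along the orthogonal splitting $T_pN = T_pM \oplus N_p$ and to treat the resulting four pieces separately. Since $\nabla\d\pi$ is a symmetric bilinear form (a standard consequence of the torsion-freeness of both Levi-Civita connections, applied to $\pi$-related vector-field extensions of $x+y$ and $u+v$), only three symmetry classes need separate attention: the tangential-tangential piece $\nabla\d\pi(x, u)$, the normal-normal piece $\nabla\d\pi(y, v)$, and the cross piece $\nabla\d\pi(x, v)$.

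The first two pieces vanish. For the tangential-tangential piece I would apply the chain rule for Hessians to the identity $\pi\circ\phi = {\rm id}_M$: at $p$ this gives $(\nabla\d\pi)(\d\phi\cdot x, \d\phi\cdot u) + \d\pi\cdot\nabla\d\phi(x, u) = 0$, and since $\nabla\d\phi(x, u) = {\rm B}(x, u) \in N_p$ while $\d\pi|_{N_p} = 0$, the residue is exactly $\nabla\d\pi(x, u) = 0$; this is essentially the argument already used in Lemma~\ref{lem:pi}. For the normal-normal piece I would introduce the variation $\sigma(s, t) = \exp^N_p(sy + tv) = (\exp^N_p\circ L)(s, t)$ with $L(s, t) = sy + tv$ linear. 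Because the image of $L$ lies entirely in $N_p$, each $\sigma(s, t)$ lies in the exponential fibre $\pi^{-1}(p)$, so $\d\pi\cdot\partial_s\sigma \equiv 0$ and $\d\pi\cdot\partial_t\sigma \equiv 0$. The chain rule for Hessians then reduces $\nabla\d\sigma|_{(0,0)}$ to $\nabla\d\exp^N_p|_0$ composed with the linear map $\d L$, and the former vanishes by the Claim established inside the proof of Lemma~\ref{lem:nabla}. Hence $\nabla^N_{\partial_s}\partial_t\sigma|_{(0,0)} = 0$ and $\nabla\d\pi(y, v) = 0$.

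The cross piece is the substantive calculation and is governed by the Weingarten equation. Let $c(s)\subset M$ be a curve with $c(0) = p$ and $\dot c(0) = x$, and let $V(s)$ be the section of the normal bundle along $c$ that is parallel in the induced normal connection with $V(0) = v$. Form the variation $\sigma(s, t) = \exp^N_{c(s)}(tV(s))$; for $(s, t)$ small a standard tubular-neighbourhood argument shows $\pi\circ\sigma(s, t) = c(s)$, so $\d\pi\cdot\partial_t\sigma \equiv 0$ and $\d\pi\cdot\partial_s\sigma = \dot c(s)$. Consequently
\[
(\nabla\d\pi)(x, v) = -\d\pi\bigl(\nabla^N_{\partial_s}\partial_t\sigma\bigr)\big|_{(0,0)} = -\d\pi\bigl(\nabla^N_{\dot c}V\bigr)\big|_{s=0}.
\]
Parallelism of $V$ in the normal connection forces $(\nabla^N_{\dot c}V)^\perp = 0$, and the Weingarten identity $\langle \nabla^N_{\dot c} V, w\rangle = -\langle V, {\rm B}(\dot c, w)\rangle$ for $w \in TM$ identifies the tangential part at $s = 0$ with $-{\rm B}'_x v$. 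Since $\d\pi$ acts as the identity on $T_pM$, the cross piece equals ${\rm B}'_x v$, and bilinearity together with Hessian symmetry then yield $\nabla\d\pi(x+y, u+v) = {\rm B}'_x v + {\rm B}'_u y$. The main obstacle I anticipate is bookkeeping in the cross-term calculation: correctly identifying the shape operator with ${\rm B}'_\bullet$ and keeping the signs consistent across the Weingarten decomposition, both of which ultimately reduce to the defining adjoint identity $\langle {\rm B}'_x v, w\rangle = \langle v, {\rm B}(x, w)\rangle$.
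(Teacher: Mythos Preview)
Your proof is correct and follows essentially the same three-part decomposition as the paper: both show $\nabla\d\pi$ vanishes on $T_pM\times T_pM$ (you via the chain rule for Hessians on $\pi\circ\phi={\rm id}_M$, the paper via the Gauss relation $\nabla_xy=\d\pi(\tilde\nabla_{\tilde x}\tilde y)$), both use that normal geodesics stay in the fibre $\pi^{-1}(p)$ for the $N_p\times N_p$ piece, and both reduce the cross term to the Weingarten identity $\langle z,\nabla\d\pi(x,v)\rangle=\langle v,\nabla\d\phi(x,z)\rangle$. Your packaging through two-parameter variations is slightly more elaborate than the paper's direct vector-field computation, and the normal-parallelism of $V$ is harmless but unnecessary (you apply $\d\pi$, which already kills the normal component of $\nabla^N_{\dot c}V$).
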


\begin{proof} 
For a smooth vector field $x$ on $M$, let $\tilde{x}$ be a smooth
vector field on $N$ that equals $x$ at $M$.  The Levi-Civita
connection on $M$ (resp. $N$) is $\nabla$
(resp. $\tilde{\nabla}$). Since $\phi$ is a riemannian immersion,
$\nabla_xy = \d\pi( \tilde{\nabla}_{\tilde{x}}\tilde{y}$). From this
fact it follows that $\nabla\d\pi\, |\, T_pM$ vanishes. On the other
hand, since $\pi \circ \exp_p\, |\, N_p = p$, $\nabla\d\pi\, |\, N_p$
vanishes. Finally, if $x\in T_pM$ and $v\in N_p$, then
$\nabla\d\pi(x,v) = -\d\pi(\tilde{\nabla}_xv)$ since
$\d\pi(v)=0$. Therefore, if $z \in T_pM$ then
\begin{align*}
\langle z, \nabla\d\pi(x,v) \rangle &= - \langle z,
\d\pi(\tilde{\nabla}_xv) \rangle = -\langle z, \tilde{\nabla}_xv
\rangle = \langle \tilde{\nabla}_{\tilde{x}}\tilde{z}, v \rangle,\\ &=
\langle v, \tilde{\nabla}_{\tilde{x}}\tilde{z} - \nabla_xz \rangle =
\langle v, \nabla\d\phi(x,z) \rangle,\\ &= \langle {\rm B}'_x v, z
\rangle.
\end{align*}
This completes the proof, since $\nabla\d\pi$ is bilinear.
\end{proof}

Let us compute the riemannian curvature tensor of $M$ in terms of that
of $N$ and the curvature of the immersion $\phi$. From the fact that
the Levi-Civita connection on $M$ is obtained by orthogonally
projecting the connection of $N$, we have that for all vector fields
$x,y,z$ on $M$
\begin{align*}
\Ri^M_{x,y}z &= \nabla_x\left( \nabla_yz \right) - \nabla_y\left(
\nabla_x z \right) - \nabla_{[x,y]}z,\\
&= \d\pi\left( \nabla_{\tilde{x}}\d\pi \cdot
\tilde{\nabla}_{\tilde{y}}\tilde{z} + \tilde{\nabla}_{\tilde{x}}
\tilde{\nabla}_{\tilde{y}}\tilde{z} - \nabla_{\tilde{y}}\d\pi \cdot
\tilde{\nabla}_{\tilde{x}}\tilde{z} -
\tilde{\nabla}_{\tilde{y}}\tilde{\nabla}_{\tilde{x}}\tilde{z} -
\tilde{\nabla}_{[\tilde{x},\tilde{y}]}\tilde{z} \right),\\
&= \d\pi\left(\, \Ri^N_{\tilde{x},\tilde{y}}\tilde{z}\,\right) +
\left({\rm B}'_x{\rm B}_y - {\rm B}'_y {\rm B}_x \right)z,
\end{align*}
where we have used the identity $\nabla_{\tilde{x}} \d\pi \cdot \tilde{\nabla}_{\tilde{y}} \tilde{z} = 
{\rm B}'_x \circ (1-\d\pi)\cdot \tilde{\nabla}_{\tilde{y}}\tilde{z}$, since $\nabla\d\pi$ vanishes on the 
horizontal part. Since $(1-\d\pi)\cdot \tilde{\nabla}_{\tilde{y}}\tilde{z} = \nabla\d\phi(y,z)$, this
demonstrates the final line.

From this equation, it follows that
\begin{equation} \label{eq:ricphi}
\Ric{}^M(x) = \d\pi\left(\, \sum_i \Ri^N_{\tilde{x},\tilde{e_i}} \tilde{e_i}\, \right) + \sum_i \left( {\rm B}'_x {\rm B}_{e_i}
- {\rm B}'_{e_i} {\rm B}_x \right) e_i,
\end{equation}
where $x \in T_pM$ and $e_i$ is an orthonormal basis of $T_pM$. Application of equation \ref{eq:ricphi} to equation \ref{eq:ricdphi} yields
\begin{equation} \label{eq:dphiricdphi}
\langle \d\phi, \Ric{\d\phi} \rangle = |\nabla\d\phi|^2 - |\tau(\phi)|^2.
\end{equation}

The scalar curvature of $M$ is the trace of the Ricci tensor, which
equals $\sum_{i,j} \langle e_i, \Ri^N_{e_i,e_j}e_j \rangle +
|\tau(\phi)|^2 - |\nabla\d\phi|^2$, where we omit the $\tilde{\
}$. When $\phi$ is the inclusion $\iota$ of $M$ into $\E$, we see
that
\begin{equation} \label{eq:scal}
\scal_M = |\tau(\iota)|^2 - |\nabla\d\iota|^2.
\end{equation}
To compute $\langle \d\phi, \nabla\tau(\phi) \rangle$, note that since
$\tau(\phi)$ is orthogonal to $T_pM$, $\langle \d\phi\cdot e_i ,
\nabla_{e_i} \tau(\phi) \rangle + \langle \nabla_{e_i} \d\phi \cdot
e_i, \tau(\phi) \rangle$ vanishes for all $i$. Therefore
\begin{equation} \label{eq:tau}
\langle \d\phi, \nabla\tau(\phi) \rangle = -|\tau(\phi)|^2.  
\end{equation}

\bigskip
\begin{proposition} \label{prop:rim} {[Following the notation of section 4.]} 
Let $\gamma$ be a riemannian immersion. Then $\Delta_{\sr{E}}$ is the
laplacian of $(\Theta,{\bf g})$, and
\begin{equation} \label{eq:kappaimmer}
\kappa = \frac{5}{3}\, | \nabla\d\iota |^2 -
\frac{2}{3}|\tau(\iota)|^2 - \frac{1}{6}\, | \nabla\d\gamma |^2 -
\frac{1}{3} |\tau(\gamma)|^2.
\end{equation}
If $\gamma$ is totally geodesic (iff $\nabla\d\gamma = 0$ iff
$\gamma(\Theta)$ is totally geodesic), then
\begin{equation} \label{eq:kappaimmertg}
\kappa = \frac{5}{3}\, | \nabla\d\iota |^2 -
\frac{2}{3}|\tau(\iota)|^2.
\end{equation}
\end{proposition}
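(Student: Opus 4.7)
The plan is to compute each term in the definition of $\kappa$ (equation \eqref{eq:kappadef}) and combine. Since $\gamma$ is a riemannian immersion, $\d\gamma | T\Theta$ is a pointwise isometry, so $E=\ker\d\gamma^{\perp}=T\Theta$ with $\mathfrak{s}=\mathbf{g}$; hence $\Delta_{\sr{E}}$ coincides with the Laplacian of $(\Theta,\mathbf{g})$. This disposes of the first claim.

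The easiest piece is $\langle\d\gamma,\nabla\tau(\gamma)\rangle$: applying equation \eqref{eq:tau} (which was established for any riemannian immersion) to $\phi=\gamma$ yields $\langle\d\gamma,\nabla\tau(\gamma)\rangle=-|\tau(\gamma)|^{2}$. Next I would compute $|\nabla\d\Gamma|^{2}$, where $\Gamma=\gamma\circ\pi$. The chain rule gives
\[
\nabla\d\Gamma(X,Y)=\nabla\d\gamma(\d\pi\cdot X,\d\pi\cdot Y)+\d\gamma\bigl(\nabla\d\pi(X,Y)\bigr).
\]
At $p\in\Theta$, decompose $X,Y\in T_{p}\mathbf{E}=T_{p}\Theta\oplus N_{p}\Theta$ and apply Lemma \ref{lem:nabladpi}. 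Summing $|\nabla\d\Gamma(e_{i},e_{j})|^{2}$ over an adapted orthonormal basis: the $(T,T)$ block contributes $|\nabla\d\gamma|^{2}$; the two mixed $(T,N)$ and $(N,T)$ blocks each contribute $\sum_{i,j}|B'_{e_{i}}e_{j}|^{2}=|\nabla\d\iota|^{2}$ (using that $\gamma$ is an isometry on $T\Theta$ and that $|B'|_{HS}=|B|_{HS}$); the $(N,N)$ block vanishes. Thus
\[
|\nabla\d\Gamma|^{2}=|\nabla\d\gamma|^{2}+2|\nabla\d\iota|^{2}.
\]

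The hardest step is $\langle\d\Gamma,\Ric{\d\Gamma}\rangle$, because $\Gamma$ is not itself a riemannian immersion (its differential is degenerate along $N\Theta$), so equation \eqref{eq:dphiricdphi} does not apply directly. I would use formula \eqref{eq:ricdphi} twice. For $\phi=\Gamma$ with domain $\mathbf{E}$, the ambient Ricci tensor vanishes (Euclidean), and $\d\Gamma$ annihilates normal vectors, so only the tangential basis vectors contribute to the curvature sum; this gives
\[
\langle\d\Gamma,\Ric{\d\Gamma}\rangle=\sum_{i,j\in T\Theta}\langle u_{i},R^{\Lambda}_{u_{i},u_{j}}u_{j}\rangle,\qquad u_{i}=\d\gamma(e_{i}).
\]
Applying the same formula to $\phi=\gamma$ and combining with equation \eqref{eq:dphiricdphi} gives $\sum_{i,j}\langle u_{i},R^{\Lambda}_{u_{i},u_{j}}u_{j}\rangle=|\nabla\d\gamma|^{2}-|\tau(\gamma)|^{2}+\scal_{\Theta}$, and equation \eqref{eq:scal} identifies $\scal_{\Theta}=|\tau(\iota)|^{2}-|\nabla\d\iota|^{2}$. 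Hence
\[
\langle\d\Gamma,\Ric{\d\Gamma}\rangle=|\nabla\d\gamma|^{2}-|\tau(\gamma)|^{2}+|\tau(\iota)|^{2}-|\nabla\d\iota|^{2}.
\]

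Finally I would substitute the three computed quantities into $\kappa=\tfrac{1}{2}|\nabla\d\Gamma|^{2}-\tfrac{2}{3}\langle\d\Gamma,\Ric{\d\Gamma}\rangle+|\tau(\gamma)|^{2}+2\langle\d\gamma,\nabla\tau(\gamma)\rangle$ and collect like terms; the coefficients work out to $-\tfrac{1}{6},\tfrac{5}{3},-\tfrac{1}{3},-\tfrac{2}{3}$ on $|\nabla\d\gamma|^{2},|\nabla\d\iota|^{2},|\tau(\gamma)|^{2},|\tau(\iota)|^{2}$ respectively, yielding \eqref{eq:kappaimmer}. The totally geodesic case \eqref{eq:kappaimmertg} is immediate from $\nabla\d\gamma=0$ (which also forces $\tau(\gamma)=0$).
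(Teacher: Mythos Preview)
Your proof is correct and follows essentially the same route as the paper: both reduce $\kappa$ via equation \eqref{eq:tau} to $\tfrac{1}{2}|\nabla\d\Gamma|^2-\tfrac{2}{3}\langle\d\Gamma,\Ric{\d\Gamma}\rangle-|\tau(\gamma)|^2$, then compute $|\nabla\d\Gamma|^2=|\nabla\d\gamma|^2+2|\nabla\d\iota|^2$ from the chain rule and Lemma \ref{lem:nabladpi}, and evaluate $\langle\d\Gamma,\Ric{\d\Gamma}\rangle$ by applying \eqref{eq:ricdphi} first to $\Gamma$ (Euclidean domain) and then to $\gamma$ together with \eqref{eq:dphiricdphi} and \eqref{eq:scal}. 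Your write-up is in fact slightly more explicit than the paper's about why only the tangential indices survive in the curvature sum and why the two pieces of $\nabla\d\Gamma$ are orthogonal.
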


\begin{proof} 
From equation \ref{eq:tau} and Theorem \ref{thm:optprior}, $\kappa =
\frac{1}{2}|\d\Gamma|^2 - \frac{2}{3} \langle \d\Gamma, \Ric{\d\Gamma}
\rangle - |\tau(\gamma)|^2$. It remains to compute the first two
terms.

\begin{enumerate}
\item Since $\Gamma = \gamma \circ \pi$, one sees that $\nabla\d\Gamma
= \nabla\d\gamma(\d\pi,\d\pi) + \d\gamma \cdot \nabla\d\pi$, which is
an orthogonal decomposition. Since $\gamma$ is a riemannian immersion,
$$|\nabla\d\Gamma|^2 = |\nabla\d\gamma|^2 + 2|\nabla\d\iota|^2.$$

\item From equation \ref{eq:ricdphi}, and the fact that
$\Ric{}^{\E}$ vanishes, $\langle \d\Gamma, \Ric{\d\Gamma} \rangle$
equals $\sum_{i,j} \langle u_i, \Ri^{\Lambda}_{u_i,u_j} u_j \rangle$
where $u_i = \d\gamma(e_i)$. Equation \ref{eq:ricdphi} and the
hypothesis that $\gamma$ is a riemannian immersion implies that this
equals $\langle \d\gamma, \Ric{\d\gamma} \rangle +
\scal_{\Theta}$. Equations \ref{eq:dphiricdphi} and \ref{eq:scal}
imply that
$$\langle \d\Gamma,\Ric{\d\Gamma} \rangle = |\nabla\d\gamma|^2 - |\tau(\gamma)|^2 - |\nabla\d\iota|^2 + |\tau(\iota)|^2.$$
\end{enumerate}
The two equations prove the formula for $\kappa$.
\end{proof}

\begin{corollary}
The formulas in equation \ref{eq:kappai} are correct.
\end{corollary}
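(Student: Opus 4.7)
The plan is a direct substitution into the formula for $\kappa$ just established in Proposition \ref{prop:rim}, namely
$$\kappa = \tfrac{5}{3}\,|\nabla\d\iota|^2 - \tfrac{2}{3}\,|\tau(\iota)|^2 - \tfrac{1}{6}\,|\nabla\d\gamma|^2 - \tfrac{1}{3}\,|\tau(\gamma)|^2.$$
The work has already been done in that proposition (together with the inputs \ref{eq:dphiricdphi}, \ref{eq:scal}, \ref{eq:tau}); the corollary is purely a matter of specializing the map $\gamma$ and collecting coefficients.

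For the first case $\gamma = \mathrm{id}_{\Theta}$, I would observe that the identity map of a riemannian manifold to itself is trivially totally geodesic, so $\nabla\d\gamma = 0$ and hence $\tau(\gamma) = 0$. The last two terms of equation \ref{eq:kappaimmer} drop out, leaving $\kappa = \tfrac{5}{3}\,|\nabla\d\iota|^2 - \tfrac{2}{3}\,|\tau(\iota)|^2$, which is the first line of equation \ref{eq:kappai}.

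For the second case $\gamma = \iota$, I would simply substitute $\nabla\d\gamma = \nabla\d\iota$ and $\tau(\gamma) = \tau(\iota)$ into equation \ref{eq:kappaimmer} and collect terms. The coefficient of $|\nabla\d\iota|^2$ becomes $\tfrac{5}{3} - \tfrac{1}{6} = \tfrac{3}{2}$ and the coefficient of $|\tau(\iota)|^2$ becomes $-\tfrac{2}{3} - \tfrac{1}{3} = -1$, giving $\kappa = \tfrac{3}{2}\,|\nabla\d\iota|^2 - |\tau(\iota)|^2$, which is the second line of equation \ref{eq:kappai}. There is no substantive obstacle here; the only things to guard against are an arithmetic slip in the fractions and the conceptual confusion (mentioned in the text above) of forgetting that although $\mathrm{id}_{\Theta}$ and $\iota$ induce the same sub-riemannian structure on $\Theta$, they are different maps with different second fundamental forms, and it is precisely this difference that produces the two distinct expressions for $\kappa$.
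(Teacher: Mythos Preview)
Your proposal is correct and follows essentially the same approach as the paper's own proof: both specialize Proposition~\ref{prop:rim} to the two cases, using $\nabla\d\gamma=0$ (hence $\tau(\gamma)=0$) for $\gamma=\mathrm{id}_\Theta$ and direct substitution for $\gamma=\iota$. Your version is in fact more explicit, spelling out the arithmetic $\tfrac{5}{3}-\tfrac{1}{6}=\tfrac{3}{2}$ and $-\tfrac{2}{3}-\tfrac{1}{3}=-1$ that the paper leaves implicit.
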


\begin{proof} 
1/ If $\gamma = {\rm id}_{\Theta}$, then $\nabla\d\gamma = 0$ so
$\tau(\gamma)=0$, also. 2/ When $\gamma = \iota$ is the inclusion map,
the equation is clearly correct.
\end{proof}

\subsection{Riemannian submersions}
Recall that $M,g \stackrel{\phi}{\longrightarrow} N,h$ is a riemannian
submersion if $\d\phi\, |\, D\, :\, (D,g|D) \to (TN, h)$ is an
isometry where $D = (\ker \d\phi)^{\perp}$. In this case $|\d\phi|^2 =
\dim N$. Equation \ref{eq:ricdphi} implies that
\begin{equation} \label{eq:ricsub}
\langle \d\phi, \Ric{\d\phi} \rangle = -\scal_D + \scal_N \circ \phi,
\end{equation}
where $\scal_D$ is defined to be the trace of $\Ric{}^M | D$. Since
$|\d\phi|$ is constant, the identity $\frac{1}{2} \Delta |\d\phi|^2 =
\langle \d\phi, \Ric{\d\phi} \rangle - \nabla\tau(\phi) \rangle -
|\nabla\d\phi|^2$ implies that
\begin{equation} \label{eq:sffsub}
|\nabla\d\phi|^2 = \scal_N \circ \phi - \scal_D - \langle \d\phi, \nabla
\tau(\phi) \rangle.
\end{equation}

\bigskip
\begin{proposition} \label{prop:rsub}
{[Following the notation of section 4.]} Let $\gamma$ be a riemannian submersion. Then 
\begin{equation} \label{eq:kappasub}
\kappa = -\frac{1}{6}\, \scal_{\Lambda} \circ \gamma + |\tau(\gamma)|^2
+ \frac{3}{2} \langle \d\gamma, \nabla\tau(\gamma) \rangle.
\end{equation}
If $\gamma$ is harmonic (iff $\tau(\gamma)=0$) then
\begin{equation} \label{eq:kappasubh}
\kappa = -\frac{1}{6}\, \scal_{\Lambda} \circ \gamma.
\end{equation}
Let $e_i$ be an orthonormal frame of $E_{\theta}$. The sublaplacian 
$\Delta_{\sr{E}}$, when applied to a smooth function $f$, equals
\begin{equation} \label{eq:deltaEsub}
\Delta_{\sr{E}}f = \sum_{i=1}^{\dim E} \nabla^2_{e_i,e_i}
f \qquad {\rm at}\ \theta.
\end{equation}
\end{proposition}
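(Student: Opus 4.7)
The plan is to exploit the observation that $\Gamma = \gamma \circ \pi$, viewed as a map from a tubular neighbourhood of $\Theta$ in $\E$ to $\Lambda$, is itself a riemannian submersion: $\ker \d\Gamma = N\Theta \oplus \ker \d\gamma$, its orthogonal complement at points of $\Theta$ is $E$, and $\d\Gamma | E = \d\gamma | E$ is an isometry onto $T\Lambda$. This lets me apply the submersion identities (\ref{eq:ricsub}) and (\ref{eq:sffsub}) directly to $\Gamma$, bypassing the chain-rule decomposition used in the immersion case (Proposition \ref{prop:rim}).

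I would first apply (\ref{eq:ricsub}) to $\Gamma$ to get $\langle \d\Gamma, \Ric{\d\Gamma}\rangle = \scal_\Lambda \circ \gamma$, since $\E$ is flat and hence the analogue of $\scal_D$ for $\Gamma$ vanishes. Next I would apply (\ref{eq:sffsub}) to $\Gamma$ to get $|\nabla \d\Gamma|^2 = \scal_\Lambda \circ \gamma - \langle \d\Gamma, \nabla \tau(\Gamma)\rangle$. At points of $\Theta$, the chain rule for $\Gamma = \gamma \circ \pi$ together with Lemma \ref{lem:nabladpi} and the vanishing $\d\Gamma | N\Theta = 0$ show $\tau(\Gamma) = \tau(\gamma)$ and $\langle \d\Gamma, \nabla \tau(\Gamma)\rangle = \langle \d\gamma, \nabla \tau(\gamma)\rangle$. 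Substituting into the definition (\ref{eq:kappadef}) of $\kappa$, the $\scal_\Lambda \circ \gamma$ contributions collect as $\tfrac{1}{2} - \tfrac{2}{3} = -\tfrac{1}{6}$ and the $\langle \d\gamma, \nabla\tau(\gamma)\rangle$ contributions as $-\tfrac{1}{2} + 2 = \tfrac{3}{2}$, yielding (\ref{eq:kappasub}). The harmonic case (\ref{eq:kappasubh}) follows by setting $\tau(\gamma) = 0$.

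For the sublaplacian formula, I would use that the submersion hypothesis $\s = g | E$ reduces $\mu(\d u, \d v)$ to $\langle \pi_E \nabla u, \pi_E \nabla v\rangle$. Then, using a local horizontal orthonormal frame $(e_i)$ in the defining identity (\ref{eq:sublap}) and integrating by parts against $\d\theta$, I would obtain a formula for $\Delta_{\sr{E}} v$ in terms of $\sum_i e_i(e_i v)$ plus first-order terms involving $\mathrm{div}(e_i)$. I would then match this to the tensorial expression $\sum_i \nabla^2_{e_i, e_i} f = \sum_i [e_i(e_i f) - (\nabla_{e_i}e_i)f]$ using two structural facts about the submersion: the O'Neill $A$-tensor is skew-symmetric on horizontal arguments, so $\sum_i \pi_V \nabla_{e_i} e_i = \sum_i A_{e_i} e_i = 0$, which kills the vertical part of $\sum_i \nabla_{e_i}e_i$; and compatibility of $\nabla$ with $g$ rewrites $\mathrm{div}(e_i)$ in terms of $\langle e_i, \nabla_{e_j} e_j\rangle$ and $\langle e_i, \nabla_{v_k}v_k\rangle$ for orthonormal verticals $(v_k)$.

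The hard part will be the sublaplacian bookkeeping. A naive pointwise comparison leaves a residual term built from the horizontal component $\sum_k \pi_H \nabla_{v_k} v_k$ of the mean curvature vector of the fibres; the clean identity (\ref{eq:deltaEsub}) would need either the implicit assumption that the fibres are minimal, or the sign convention on $\Delta_{\sr{E}}$ (flagged in the footnote to (\ref{eq:sublaplocal})) to absorb it. The curvature portion (\ref{eq:kappasub}), by contrast, drops out immediately once one recognises that $\Gamma$ itself is a riemannian submersion and invokes (\ref{eq:ricsub})--(\ref{eq:sffsub}).
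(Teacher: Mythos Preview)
For the $\kappa$ formula your argument is essentially the paper's: both apply (\ref{eq:ricsub}) and (\ref{eq:sffsub}) directly to the submersion $\Gamma=\gamma\circ\pi$, use that $\E$ is flat to kill the $\scal_D$ contribution, and substitute into (\ref{eq:kappadef}). The only cosmetic difference is that the paper invokes Lemma~\ref{lem:pi} (harmonicity of $\pi$) rather than Lemma~\ref{lem:nabladpi} to reduce $\tau(\Gamma)$ and $\langle\d\Gamma,\nabla\tau(\Gamma)\rangle$ to their $\gamma$-counterparts on $\Theta$.

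For the sublaplacian formula the paper takes a quite different and much shorter route. It does \emph{not} integrate by parts against the defining identity (\ref{eq:sublap}); instead it works directly with the local-coordinate expression (\ref{eq:sublaplocal}) in riemannian normal coordinates $x^j$ centred at $\theta$, chosen so that $\partial_1,\ldots,\partial_{\dim E}$ span $E_\theta$. In normal coordinates the volume density is $1+O(|x|^2)$, and the paper writes $\mu=\sum_{i\le\dim E}\partial_i\otimes\partial_i+O(|x|^2)$, so at the origin (\ref{eq:sublaplocal}) collapses to $\sum_{i\le\dim E}\partial_i^2=\sum_i\nabla^2_{e_i,e_i}$ in one line. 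Your divergence bookkeeping and the residual fibre-mean-curvature term $\sum_k\pi_H\nabla_{v_k}v_k$ are precisely the first-order derivatives of $\mu^{ij}$ that this $O(|x|^2)$ claim suppresses; so the paper's approach does not so much resolve the issue you flag as repackage it as a coordinate assertion about how $E$ varies to first order in normal coordinates. Your instinct that something nontrivial is hiding there is sound, but if you want to present the argument as the paper does, the normal-coordinate route is the cleaner way to organise it than chasing O'Neill tensors through an integration by parts.
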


\begin{proof} Equation
  \ref{eq:ricsub}, when applied to the submersion $\Gamma =
  \gamma\circ\pi$, yields $\langle \d\Gamma, \Ric{\d\Gamma} \rangle =
  \scal_{\Lambda} \circ \Gamma$ since $\E$ is flat. In addition,
  since $|\d\Gamma|$ is constant, equation \ref{eq:sffsub} and lemma
  \ref{lem:pi} implies that $|\nabla\d\Gamma|^2 = \scal_{\Lambda}
  \circ \Gamma -
  \langle \d\gamma, \nabla \tau(\gamma) \rangle.$ Equation
  \ref{eq:kappadef}, along with $\pi \circ \iota = {\rm id}_{\Theta}$,
  implies equation \ref{eq:kappasub}.

For the proof of equation \ref{eq:deltaEsub}, let $x^j$ be a system of
normal coordinates centred at $\theta$ and let $f_j = \frac{\partial\
\ }{\partial x^j} + O(|x|^2)$ be an orthonormal frame. Assume that
$e_i=f_i$ for $i=1,\ldots, \dim E$. The bundle map $\mu : T^* \Theta
\to T\Theta$ that characterizes the subriemannian structure is
written at $\theta$ as
$$\mu = \sum_{i}^{\dim E} f_i \otimes f_i = \sum_i^{\dim E}
\frac{\partial\ \ }{\partial x^i} \otimes \frac{\partial\ \ }{\partial
x^i} + O(|x|^2).$$ Since the riemannian metric ${\bf g} = \sum_{i=1}^n
\d x^i \otimes \d x^i + O(|x|^2)$, where $n=\dim \Theta$, the
riemannian volume form $\d\theta = (1 + O(|x|^2))\ \d x^1 \wedge
\cdots \wedge \d x^n$, and so the sub-laplacian at $\theta$ is
$$\Delta_{\sr{E}} = \sum_{i=1}^{\dim E} \frac{ \partial^2\ \ }{(\partial
  x^i)^2},$$ which equals $\sum_{i=1}^{\dim E} \nabla^2_{e_i,e_i}$ in
  invariant notation.
\end{proof}

\bigskip
\noindent{\bf Remark}. If one compares the formulas in
Propositions~\ref{prop:rim} and \ref{prop:rsub}, one sees that both
$\Delta_{\sr{E}}$ and $\kappa$ depend on the inclusion map $\iota$
when $\gamma$ is a riemannian immersion; when $\gamma$ is a riemannian
submersion $\Delta_{\sr{E}}$ and $\kappa$ do not depend on the
inclusion map $\iota$. In the latter case, the geometric information
carried by the riemannian submersion $\gamma$ subsumes that carried by
$\iota$.

\end{document}